\documentclass[11pt,a4paper,reqno]{amsart}
\usepackage{amsmath,amssymb,mathtools,mathrsfs}
\usepackage{booktabs,tabularx,array}
\usepackage{placeins}
\usepackage[dvipsnames]{xcolor}
\usepackage{microtype,cite,enumitem}
\usepackage[pagebackref=true,colorlinks=false]{hyperref}
\hypersetup{pdfborder={0 0 0.65},linkbordercolor={1 0 0},citebordercolor={0 0.65 0},urlbordercolor={0 0 1},pdftitle={Lacunary maximal operators on convex boundaries: motivated by Wright's Problem 16},pdfauthor={Xi Cen, Guixiang Hong, Xinfeng Wu}}
\allowdisplaybreaks[2]
\newcolumntype{Y}{>{\raggedright\arraybackslash}X}
\newcommand{\fullref}[2]{\mbox{\hyperref[#1]{#2~\ref*{#1}}}}
\newcommand{\Thmitemref}[2]{\mbox{\hyperref[#1]{Theorem~\ref*{#1}#2}}}
\newcommand{\Propitemref}[2]{\mbox{\hyperref[#1]{Proposition~\ref*{#1}#2}}}
\newcommand{\Eqref}[1]{\hyperref[#1]{\textup{(\ref*{#1})}}}
\newcommand{\Thmref}[1]{\fullref{#1}{Theorem}}
\newcommand{\Lemref}[1]{\fullref{#1}{Lemma}}

\newcommand{\Probref}[1]{\fullref{#1}{Problem}}
\newcommand{\Secref}[1]{\fullref{#1}{Section}}

\newcommand{\doilink}[1]{\mbox{\href{https://doi.org/#1}{doi:\nolinkurl{#1}}}}
\newcommand{\arxiv}[1]{\mbox{\href{https://arxiv.org/abs/#1}{arXiv\,\nolinkurl{#1}}}}
\renewcommand*{\backref}[1]{}

\renewcommand*{\backrefalt}[4]{\ifcase #1\relax\or~{\footnotesize #2}\else~{\footnotesize #2}\fi}

\numberwithin{equation}{section}
\newtheoremstyle{boldplain}{6pt}{6pt}{\itshape}{}{\bfseries}{.}{.5em}{}
\newtheoremstyle{boldremark}{6pt}{6pt}{\normalfont}{}{\bfseries}{.}{.5em}{}
\theoremstyle{boldplain}
\newtheorem{theorem}{Theorem}[section]
\newtheorem{lemma}{Lemma}[section]

\newtheorem*{theoremA}{Theorem A}
\theoremstyle{boldremark}
\newtheorem{problem}{Problem}[section]
\newtheorem{remark}[theorem]{Remark}

\newcommand{\R}{\mathbb R}
\newcommand{\Z}{\mathbb Z}

\newcommand{\dd}{\,\mathrm d}

\newcommand{\one}{\mathbf1}

\DeclareMathOperator{\dist}{dist}
\DeclareMathOperator{\Var}{Var}

\begin{document}
\title[Convex lacunary maximal operators]{\bf Lacunary maximal operators on convex boundaries}
\author[X. Cen]{Xi Cen}
\address{Xi Cen, School of Science, China University of Mining and Technology-Beijing, Beijing 100083, People's Republic of China}
\email{xicenmath@gmail.com}
\author[X. Wu]{Xinfeng Wu}
\address{Xinfeng Wu, School of Science, China University of Mining and Technology-Beijing, Beijing 100083, People's Republic of China}
\email{wuxf@cumtb.edu.cn}
\date{}

\begin{abstract}
We disprove the cap-sufficiency proposal in Wright's Problem 16 for
$1<p<(\sqrt{13}-1)/2$ and for $p>3$. For each fixed exponent in these
ranges, one smooth strictly convex
domain has finite original directional cap integral, while the lacunary
maximal operator of complete arclength fails restricted weak type.
We also give a geometric sufficient condition for strong $L^p$
boundedness, for every finite $1<p<\infty$, on an arbitrary given
bounded convex domain. It requires one logarithmic integral of a curvature lower envelope,
without a differentiability assumption on the boundary or a restriction
on the number of curvature zeros.
Both results concern unnormalised true arclength, one fixed interior
origin and all integer dyadic dilations.

\end{abstract}
\maketitle
\begin{center}\begin{minipage}{12.3cm}\small
\textbf{Key words:} Lacunary maximal operators, convex boundaries,
supporting caps, positive decompositions, Fourier multipliers.\\
\textbf{2020 Mathematics Subject Classification:}
42B25, 42B15, 42B20, 52A10.
\end{minipage}\end{center}

\section{Introduction}\label{sec:introduction}
\subsection{Background and motivations}\label{subsec:background}
Let $\Omega\subseteq\R^2$ be a bounded convex domain with
$0\in\operatorname{int}\Omega$, and let
$\sigma=\mathcal H^1|_{\partial\Omega}$ be its complete unnormalised
arclength. We study
\begin{equation}\label{eq:original-maximal}
 M_\sigma f(x)=\sup_{k\in\Z}
 \left|\int_{\partial\Omega}f(x-2^ky)\dd\sigma(y)\right|.
\end{equation}
The domain and the origin are fixed. The selected scale may depend
on $x$, so a single-average estimate does not determine the maximal
bound. A classical starting point for this dyadic-dilation problem is
lacunary spherical averaging \cite{Calderon}.

Fourier-decay criteria for maximal convolutions were developed by
Duoandikoetxea and Rubio de Francia \cite{DR}. A related
measure-theoretic approach on homogeneous groups is presented in
\cite{GHW}. Seeger and Wright
\cite{SW} formulated a geometric condition for convex boundaries
in terms of supporting caps. Let $\ell^\pm(e)$ be the supporting
lines perpendicular to $e\in S^1$. For a fixed small $\delta_*>0$, set
\begin{equation}\label{eq:original-caps}
 \begin{split}
 \Lambda_\Omega(e,\delta)
 &=\max_\pm\sigma\{y\in\partial\Omega:
                         \dist(y,\ell^\pm(e))\le\delta\},\\
 \mathcal C_p(\Omega)
 &=\sup_{e\in S^1}\int_0^{\delta_*}
             \Lambda_\Omega(e,\delta)^p\frac{\dd\delta}{\delta}.
 \end{split}
\end{equation}
The supporting direction is fixed throughout the depth integral.
Convex cap estimates have a classical finite-type counterpart in
\cite{BNW}. For the general planar setting, the complete-boundary estimate \cite[(4), p.~238]{SW},
\begin{equation}\label{eq:published-cap-fourier}
 |\widehat\sigma(Re)|\lesssim
       \Lambda_\Omega(e,R^{-1})\qquad(R\ge R_\Omega),
\end{equation}
connects shallow-cap geometry with the Fourier analysis of the
original dilates. The following theorem identifies the exact Hilbert-space
criterion and supplies the necessary condition used throughout this paper.

\begin{theoremA}[Seeger--Wright
{\cite[Theorem~1.2, (5)--(6), p.~238]{SW}}]
\phantomsection\label{thm:published-hilbert}
For every bounded convex planar domain containing the origin,
\begin{equation}\label{eq:published-hilbert}
 \|M_\sigma\|_{L^2\to L^2}<\infty
       \quad\Longleftrightarrow\quad\mathcal C_2(\Omega)<\infty.
\end{equation}
For every $1<p<\infty$, Seeger and Wright also proved the necessary
estimate
\begin{equation}\label{eq:published-necessary}
       \mathcal C_p(\Omega)^{1/p}
                   \lesssim\|M_\sigma\|_{L^p\to L^p}.
\end{equation}
\end{theoremA}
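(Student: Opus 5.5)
Theorem A has three parts: sufficiency of $\mathcal C_2(\Omega)<\infty$ for $L^2$ boundedness, necessity at $p=2$, and the necessary estimate \Eqref{eq:published-necessary} for all $1<p<\infty$. The equivalence at $p=2$ follows by combining the sufficiency direction with the case $p=2$ of \Eqref{eq:published-necessary}, so two arguments are needed.

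\emph{Sufficiency at $p=2$.} I would use the Fourier estimate \Eqref{eq:published-cap-fourier} together with a square-function bound. Dominate $M_\sigma f$ by
\[
 \sup_k|\sigma_{2^k}*f|\le \Bigl(\sum_k|\sigma_{2^k}*f|^2\Bigr)^{1/2}.
\]
By Plancherel, this reduces to bounding $\sup_\xi\sum_k|\widehat\sigma(2^k\xi)|^2$. The low frequencies $|2^k\xi|\le R_\Omega$ need separate handling. On that range I would subtract $\sigma(\partial\Omega)\,\phi_{2^k}*f$, where $\phi$ is a Schwartz bump with $\widehat\phi(0)=1$. The Hardy--Littlewood maximal function controls the subtracted term. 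The remaining multipliers $\widehat\sigma(2^k\xi)-\sigma(\partial\Omega)\widehat\phi(2^k\xi)$ are $O(|2^k\xi|)$ for small arguments and $O(1)$ up to $R_\Omega$, so their squared sum is uniformly bounded. Apply the same subtraction at high frequencies, where the $\widehat\phi$ term decays rapidly. Write $\xi=|\xi|e$ and $R=2^k|\xi|\ge R_\Omega$. Then \Eqref{eq:published-cap-fourier} gives
\[
 \sum_k|\widehat\sigma(2^k\xi)|^2\lesssim\sum_{2^j\ge R_\Omega}\Lambda_\Omega(e,2^{-j}c)^2,
\]
with $c\in[1,2)$. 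Since $\Lambda_\Omega(e,\cdot)$ is monotone in $\delta$, this dyadic sum is comparable to $\int_0^{\delta_*}\Lambda_\Omega(e,\delta)^2\,\dd\delta/\delta$, up to a bounded number of terms. Taking the supremum over $e$ gives a bound by $\mathcal C_2(\Omega)+O(1)$.

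\emph{Necessity for all $p$.} I would test $M_\sigma$ on a family of functions tailored to caps. Fix $e$, a depth $\delta\ll 1$, and the supporting line $\ell^+(e)$. Let $\Gamma_\delta$ be the cap $\{y\in\partial\Omega:\dist(y,\ell^+(e))\le\delta\}$. Its length is $\Lambda=\Lambda_\Omega(e,\delta)$, it has width $\lesssim\Lambda$ along $\ell^+(e)$, and it has thickness $\delta$. Take $f=\one_{E}$, where $E$ is a $\delta$-neighbourhood of a segment of $\ell^+(e)$ of length about $\Lambda$, suitably dilated. For $x$ in a region of measure about $|E|/\Lambda\cdot(\text{stuff})$, the average $\sigma_{2^k}*f(x)$ captures the whole cap at the matching scale $2^k\approx$ fixed, giving $M_\sigma f\gtrsim\Lambda$ there.

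This alone only yields $\Lambda\lesssim\|M_\sigma\|$. To recover the logarithmic integral, I would superpose such sets over the dyadic depths $\delta=2^{-j}$, $j\le J$. At scale $2^{j}$ one places the slab of thickness $1$ and length $2^{j}\Lambda(e,2^{-j})$, stacked along direction $e$, so the pieces are essentially disjoint. The point is a lacunary trick: the different depths are seen by different dilations $2^k$. One then obtains $M_\sigma f(x)\gtrsim\Lambda(e,2^{-j})$ on pieces whose measures combine to give
\[
 \int|M_\sigma f|^p\gtrsim\sum_j\Lambda(e,2^{-j})^p|A_j|
 \quad\text{with}\quad \|f\|_p^p\approx\sum_j|A_j|.
\]
A better choice is a randomized or Rademacher-signed sum, or a fixed function with $\widehat f$ concentrated near direction $e$, averaged over translations. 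The aim is to make the $|A_j|$ comparable while $\|f\|_p^p$ stays comparable to a single piece. I expect this step to be the main obstacle: arranging disjointness so that the lower bound sums over $j$ while $\|f\|_p$ does not. My first attempt would use $f$ equal to the indicator of a long thin half-plane strip $\{x\cdot e\in[0,1]\}\cap B(0,N)$. For this $f$, the dilate at scale $2^k$ sees a cap of depth about $2^{-k}$ whenever $x\cdot e$ lies at distance about $2^k$ from the strip. This makes the sets $A_k$ automatically disjoint shells of measure about $N\cdot2^k$. Normalizing appropriately, for example by weighting $f$ on dyadic strips, then gives $\sum_k\Lambda(e,2^{-k})^p\lesssim\|M_\sigma\|^p$, which is \Eqref{eq:published-necessary}.
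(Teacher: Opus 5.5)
The paper does not prove Theorem~A; it quotes it from Seeger--Wright, so your proposal has to stand on its own.

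\textbf{Sufficiency at $p=2$.} This half is correct. After subtracting $\sigma(\partial\Omega)\phi_{2^k}*f$, Plancherel, \Eqref{eq:published-cap-fourier} and the monotonicity of $\delta\mapsto\Lambda_\Omega(e,\delta)$ bound $\sup_\xi\sum_k|\widehat\sigma(2^k\xi)-\sigma(\partial\Omega)\widehat\phi(2^k\xi)|^2$ by $C_\Omega(1+\mathcal C_2(\Omega))$.

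\textbf{Necessity: where the gap is.} Your last idea, a single thin strip, is the right test, but the computation you attach to it is wrong, and ``normalizing appropriately'' does not repair it. Put $S=\{x:|x\cdot e|\le1,\ |x\cdot e^\perp|\le N\}$, $f=\one_S$, $h(e)=\dist(0,\ell^+(e))$ and $\Lambda^+(e,\delta)=\sigma\{y\in\partial\Omega:\dist(y,\ell^+(e))\le\delta\}$. If $|x\cdot e^\perp|\le N/2$ and $2^k\operatorname{diam}\Omega\le N/2$, then $\sigma_{2^k}*f(x)=\sigma\{y\in\partial\Omega:y\cdot e\in2^{-k}(x\cdot e+[-1,1])\}$. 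This set contains a shallow cap only when $x\cdot e$ lies within $O(1)$ of $2^kh(e)$. On $A_k=\{x:x\cdot e\in[2^kh(e),2^kh(e)+\tfrac12],\ |x\cdot e^\perp|\le N/2\}$ the average is at least $\Lambda^+(e,2^{-k-1})$. If instead $x\cdot e=\theta2^kh(e)$ with a fixed $\theta\in(0,1)$, the slab sits at a fixed depth below $\ell^+(e)$. It then meets $\partial\Omega$ transversally, in length $O_{\Omega,\theta}(2^{-k})$.

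So the good sets have measure $N/2$, not $N2^k$. Your count would give $\sum_k2^k\Lambda_\Omega(e,2^{-k})^p\lesssim\|M_\sigma\|_{p\to p}^p$, which is false for the circle whenever $p\le2$. You also never say where the interior origin is used. It is exactly the bound $h(e)\ge h_0>0$, which makes the $A_k$ pairwise disjoint once $2^kh_0\ge1$. If $h(e)=0$ the sets all coincide, and the test yields only $\Lambda_\Omega(e,\delta)\lesssim\|M_\sigma\|_{p\to p}$.

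\textbf{The fix.} With the correct count no weighting is needed, and the ``main obstacle'' you anticipated disappears. A single function serves every depth, and each $A_k$ has measure comparable to $\|f\|_p^p=4N$. Hence, with $2^{k_1}h_0\ge1$ and uniformly in $e$, in $K$ and in $N\ge2^{K+1}\operatorname{diam}\Omega$,
\[
 \frac N2\sum_{k_1\le k\le K}\Lambda^+(e,2^{-k-1})^p
 \le\|M_\sigma f\|_p^p\le4N\|M_\sigma\|_{p\to p}^p.
\]
By monotonicity the dyadic sum dominates $(\log2)^{-1}\int_0^{2^{-k_1-1}}\Lambda^+(e,\delta)^p\,\dd\delta/\delta$. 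The remaining depths cost $O_\Omega(\sigma(\partial\Omega)^p)$, and $\sigma(\partial\Omega)\le\|M_\sigma\|_{p\to p}$ by testing on large balls. Applying the same argument to $\Lambda^+(-e,\cdot)$ handles $\ell^-(e)$. This proves \Eqref{eq:published-necessary}, and with it necessity at $p=2$.
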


Wright's contribution to \cite[Section~10, pp.~15--16]{Problems}
asks for a geometric description of strong boundedness.
\begin{problem}[Wright
{\cite[Problem~16, pp.~15--16]{Problems}}]\label{prob:wright}
For each fixed $1<p<\infty$, determine necessary and sufficient
geometric conditions on a convex boundary for
$\|M_\sigma\|_{L^p\to L^p}<\infty$. Its precise cap-sufficiency
proposal asks whether $\mathcal C_p(\Omega)<\infty$ implies this
bound for $p\ne2$.
\end{problem}

Our first theorem disproves this proposal throughout two open exponent
ranges, one below two and the other above three. These counterexamples
motivate sufficient conditions stated directly in geometric terms.
Our positive result uses one logarithmic curvature integral. A lower
envelope incorporates the local variation of curvature and allows the
criterion to be stated on a given convex boundary without a smoothness
assumption. The general geometric characterization
asked for in \Probref{prob:wright} remains open.

\subsection{Main results}\label{subsec:main-results}
We use the weak norm and the restricted weak-type constant
\[
 \begin{split}
 \|g\|_{L^{p,\infty}}
    &=\sup_{\lambda>0}\lambda\,
                          |\{x:|g(x)|>\lambda\}|^{1/p},\\
 \|T\|_{L^p\to L^{p,\infty}}^{\mathrm{restricted}}
    &=\sup_{\substack{E\subseteq\R^2\ \mathrm{measurable}\\0<|E|<\infty}}
        \frac{\|T\mathbf1_E\|_{L^{p,\infty}}}{|E|^{1/p}}.
 \end{split}
\]
We first show that finite cap energy can coexist with failure even on
characteristic functions. The examples below concern one fixed smooth
convex boundary for each stated exponent.
\begin{theorem}[Failure of universal cap sufficiency]
\label{Bneg:thm:general-negative}
For each fixed $p_0\in(1,(\sqrt{13}-1)/2)\cup(3,\infty)$ there is a bounded
$C^\infty$ strictly convex domain $\Omega_{p_0}$ containing the origin,
with exactly one curvature-zero point, such that
\begin{equation}\label{Bneg:eq:general-obstruction}
 \begin{gathered}
 \mathcal C_{p_0}(\Omega_{p_0})<\infty
 \quad\text{and}\\
 \|M_{\sigma_{p_0}}\|_{L^{p_0}\to L^{p_0,\infty}}^{\mathrm{restricted}}
 =\|M_{\sigma_{p_0}}\|_{L^{p_0}\to L^{p_0,\infty}}
 =\|M_{\sigma_{p_0}}\|_{L^{p_0}\to L^{p_0}}=\infty.
 \end{gathered}
\end{equation}
Here $\sigma_{p_0}$ is complete unnormalised true arclength, with
one fixed interior origin and all $k\in\Z$.
\end{theorem}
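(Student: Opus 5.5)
The plan is an explicit construction followed by two separate computations: an upper bound for the cap energy $\mathcal C_{p_0}$, and a lower bound for $M_\sigma$ on indicator functions. I have not matched the exponent ranges to specific parameter choices, so the details below are a guess at how they arise.

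\emph{Construction.} I would take $\Omega_{p_0}$ to be a smooth strictly convex domain that agrees, near one boundary point $P_0=(0,-1)$, with the graph $y=-1+\phi(t)$. Here $\phi$ is a $C^\infty$ convex even profile with $\phi(0)=0$, $\phi''>0$ for $t\ne0$, and $\phi$ flat to infinite order at $0$. Away from $P_0$ the boundary is closed up with positive curvature, so $P_0$ is the only curvature zero.

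A finite-type zero, $\phi(t)\approx|t|^m$, gives caps $\Lambda\approx\delta^{1/m}$ and a bounded maximal operator for every $p$. So the flatness has to be of logarithmic strength. I would use a one-parameter family such as $\phi(t)=\exp(-|t|^{-a})$, or a modification built from lacunary pieces, with the parameters tuned to $p_0$. The vertical cap at depth $\delta$ then has length about $(\log 1/\delta)^{-1/a}$, and $\int_0^{\delta_*}(\log 1/\delta)^{-p_0/a}\,\frac{\dd\delta}{\delta}<\infty$ exactly when $p_0>a$.

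\emph{Finiteness of $\mathcal C_{p_0}$.} I must bound $\Lambda_\Omega(e,\delta)$ uniformly in $e$, not just for the vertical direction. Write $e$ as the normal at the point $t_e$ where $\phi'(t_e)=\tan\theta$. The cap at depth $\delta$ is then the set of $t$ with $\phi(t)-\phi(t_e)-\phi'(t_e)(t-t_e)\le\delta$.

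I would split into three cases. For $\delta\lesssim\phi(t_e)$ the cap is governed by the local curvature $\phi''(t_e)$ and has length about $(\delta/\phi''(t_e))^{1/2}$, capped by $|t_e|$. In the intermediate regime the Taylor expansion of $\exp(-|t|^{-a})$ at $t_e$ controls the length. For deep caps the bound is the same as in the vertical case. Directions away from $P_0$ give $\Lambda\lesssim\delta^{1/2}$. Integrating in $\delta$ should give a bound independent of $t_e$ once $p_0$ exceeds a threshold that depends on the profile.

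\emph{Failure of restricted weak type.} This is the main obstacle. Fix a scale $j$, let $t_j$ be the point where $\phi(t_j)=2^{-j}$ roughly, and let $\delta_j$ be the cap depth there. Near $P_0$ the arc is then within $\delta_j$ of a segment of length $\approx |t_j|\approx j^{-1/a}$. Take $E_j$ to be a union of $\delta_j$-neighbourhoods of translated, slightly rotated segments, arranged in Kakeya or Perron-tree fashion, or more simply stacked along the dyadic dilates. The arrangement should be such that for each $x$ in a large set $F_j$ there is some $k=k(x)$ with $2^k$ times the nearly flat arc lying in $x-E_j$. Then $M_\sigma\mathbf1_{E_j}(x)\gtrsim 2^{-k}\cdot 2^k|t_j|$ on $F_j$, after accounting for the dilation normalisation.

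The aim is to show $\lambda_j^{p_0}|F_j|/|E_j|\to\infty$. Here the freedom to choose $k(x)$ is what beats the fixed-direction cap integral: each of many dyadic scales contributes its own flat piece. I expect the gain to be polynomial in $|t_j|$ against a loss from counting scales, and that balancing these two against the cap threshold from the previous step produces a quadratic inequality in $p_0$. The root $(\sqrt{13}-1)/2$ of $p^2+p-3=0$ would give the lower range, and a dual configuration the range $p>3$.

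In the $p>3$ regime I would try test sets made of many small balls, whose images under many scales overlap, in place of the segment configuration. If the two families of examples need different profiles, I would choose the parameters of $\phi$ separately in each range.

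Finally, restricted weak type is implied by weak type, which is implied by strong type. So failure of restricted weak type gives all three infinite norms in \eqref{Bneg:eq:general-obstruction} at once.
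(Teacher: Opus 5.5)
\textbf{There is a genuine gap: the profile you specify cannot produce a counterexample, and the mechanism that does is missing.}

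Take $\phi=\exp(-|t|^{-a})$, or any single flat point whose curvature increases in $|t|$. Near the flat point, $\kappa_*(t)\ge\min\{\kappa(\pm t/2),|t|/2\}$, so $[\log(e+\kappa_*^{-1})]^{1/p}\lesssim|t|^{-a/p}$. This is integrable exactly when $p>a$, which is the same threshold you found for $\mathcal C_p<\infty$. Theorem~\ref{cond:thm:cap-iff} of this paper then gives $\|M_\sigma\|_{p\to p}<\infty$ whenever your cap integral is finite, for every $p$. Retuning $a$ separately in the two ranges cannot help.

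Geometrically, dilations preserve direction, so every nearly flat piece produced by one flat point is parallel to the others. A single-direction test is exactly what $\mathcal C_p$ already measures, so there is no room for a Kakeya or Perron-tree gain. Your restricted weak-type step is therefore not merely unfinished but unavailable in this setup. The quadratic $p^2+p-3$ is read off from the stated endpoint rather than derived.

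\textbf{What the paper does instead.} The curvature is made non-monotone.
\begin{itemize}
\item The baseline $K_0=e^{-|t|^{-a}}$ with $a<p_0$ is kept for deep caps.
\item In disjoint blocks $B_n$ near $2^{-n}$, curvature is redistributed while preserving each block's mass, so slopes outside the blocks are unchanged.
\item At $N_n\eqsim Q_n^2$ equally spaced nodes the tangent is exactly $A_n^{-1}v_i$, with distinct primitive $v_i\in[Q_n,2Q_n]^2$.
\item On a hole of radius $m_n$ around each node the curvature equals $\varepsilon_n=e^{-m_n^{-p_0}}$. Each hole gives an arc of length $\eqsim m_n$ that stays flat over $J_n\eqsim m_n^{-p_0}$ dyadic scales.
\end{itemize}
The cap integral survives because the expanded holes are disjoint. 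Each supporting contact sees at most one exceptional hole, which costs $m_n^{-p_0}\int_0^{A_*m_n}p_0L^{p_0-1}\,dL=A_*^{p_0}$.

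\textbf{Lower range.} The test set is a lattice-periodic union of balls $\{\operatorname{dist}(SA_nx,\Z^2)<r\}$, not a Perron tree. Two facts drive the count. Stripes with distinct integer normals are independent on the torus. Averaging over $S\in[1,2]$ controls overlaps between different scales. Together these give the bound of Lemma~\ref{Bneg:lem:rational-lattice-scale-lower},
\[
m(NJ)^{2/p-1}Q^{2/p-2}\eqsim b_n^{p-1}N_n^{3/p-p-1}.
\]
The requirement $3/p-p-1>0$ is precisely $p^2+p-3<0$.

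\textbf{Above three.} The lattice of small balls you propose for $p>3$ is essentially the lower-range test set. For $p_0>3$ the paper instead uses hole depth $e^{-m^{-s_0}}$ with $3<s_0<p_0$. The test set is a union of $N$ periodic stripes $\{\operatorname{dist}(\mathfrak n_i\cdot A_nx,\Z)<\rho\}$. The key ingredient is the phase programming of Lemma~\ref{Bneg:Ctest:phase-programming}: movable connectors fix the phases $2^{k_0}\mathfrak n_i\cdot A_nz_i$, combined with binary digit coding. This ensures that at each point a single common scale fires all $N$ arcs at once. Nothing in your proposal plays this role.

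Your final reduction, that failure of restricted weak type gives all three infinite norms, is correct.
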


For the positive result, let $\mu$ be the turning measure of the
convex boundary and let $\kappa=d\mu_{\rm ac}/d\sigma$ be its a.e.
curvature. Corner angles and singular turning remain in $\mu$.
Write $d_{\partial\Omega}(x,y)$ for the shorter arclength distance along
the boundary, and define the curvature lower envelope
\begin{equation}\label{lc:eq:curvature-envelope}
 \kappa_*(x)=\operatorname*{ess\,inf}_{y\in\partial\Omega}
          \{\kappa(y)+d_{\partial\Omega}(x,y)\}.
\end{equation}
The essential infimum uses true arclength. This is the largest
$1$-Lipschitz function lying below $\kappa$ a.e. It controls local
curvature oscillation without imposing a regularity class.
\begin{theorem}[A single sufficient curvature condition]
\label{cond:thm:cap-iff}
Let $\Omega$ be any given bounded convex planar domain with its fixed
interior origin and complete unnormalised arclength $\sigma$.
For every finite $1<p<\infty$, the single condition
\begin{equation}\label{lc:eq:main-condition}
 \int_{\partial\Omega}
       [\log(e+\kappa_*(x)^{-1})]^{1/p}\dd\sigma(x)<\infty
\end{equation}
implies
\begin{equation}\label{cond:eq:cap-upper-bound}
 \|M_\sigma\|_{L^p(\R^2)\to L^p(\R^2)}<\infty.
\end{equation}
The integrand is $+\infty$ wherever $\kappa_*=0$.
\end{theorem}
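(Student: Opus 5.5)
We prove \Thmref{cond:thm:cap-iff} by cutting $\sigma$ into pieces adapted to the size of $\kappa_*$, bounding each piece's lacunary maximal operator through its Fourier decay, and summing. Since $\kappa_*$ is $1$-Lipschitz along $\partial\Omega$, on any boundary arc $I$ of length at most $\kappa_*(x)/2$ about a point $x$ we have $\kappa\ge\kappa_*\ge\kappa_*(x)/2$ a.e. Condition \Eqref{lc:eq:main-condition} forces $\kappa_*>0$ $\sigma$-a.e. The set $\{\kappa_*=0\}$ is closed, so its complement is a countable union of open arcs. On each such arc we take a Whitney-type cover by arcs $I_j$ with $|I_j|\approx\min(\kappa_*(x_j),1)$, bounded overlap, and a smooth-in-arclength partition of unity $\sigma=\sum_j\phi_j\sigma$ up to a $\sigma$-null set. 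On each $I_j$ the curvature is comparable to $\kappa_j:=\kappa_*(x_j)$ from below. This is the only curvature information we use, and it requires no differentiability.

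For a single piece $\sigma_j=\phi_j\sigma$, we use the standard van der Corput/stationary phase argument for a convex arc with curvature $\ge\kappa_j$ and total turning bounded (the turning on $I_j$ may be large because of singular parts of $\mu$, but $\mu(\partial\Omega)=2\pi$ and monotonicity of the normal keep the variation argument working). It gives
\[
 |\widehat{\sigma_j}(\xi)|\lesssim |I_j|\min\bigl(1,(\kappa_j|I_j|^2|\xi|)^{-1/2}\bigr)+(\text{tail from }\phi_j'),
\]
together with the trivial derivative bound $|\nabla\widehat{\sigma_j}|\lesssim|I_j|$. The Duoandikoetxea--Rubio de Francia machinery \cite{DR}, applied to a single lacunary family, then yields
\[
 \|M_{\sigma_j}\|_{L^p\to L^p}\lesssim |I_j|\,\bigl[\log(e+\kappa_j^{-1})\bigr]^{1/p}.
\]
To get this dependence, we split scales into the $O(\log(e+\kappa_j^{-1}))$ dyadic levels where decay has not yet set in. We bound those by a square function combined with the $\ell^p$-sum of at most $N$ averages, which costs $N^{1/p}$ via interpolation between the trivial $\ell^\infty$ bound ($L^\infty$, constant $|I_j|$) and the $\ell^1$ sum on $L^1$ ($N|I_j|$). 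The remaining levels have geometric decay and are handled by Littlewood--Paley square functions as in \cite{DR}. The $N^{1/p}$ growth comes from interpolating between $p=1$, where the cost is $N$, and $p=\infty$, where it is $1$. For $p<2$ we interpolate with the $L^2$ estimate instead, and we need to verify that the exponent still comes out as $1/p$ rather than something worse. If not, we would instead use the positivity of $\sigma_j$ and dominate the low levels by $N$ single averages at each point.

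Summing, $M_\sigma\le\sum_j M_{\sigma_j}$ because the $\sigma_j$ are positive measures. Hence
\[
 \|M_\sigma\|_{p\to p}\lesssim\sum_j|I_j|[\log(e+\kappa_j^{-1})]^{1/p}\approx\int_{\partial\Omega}[\log(e+\kappa_*^{-1})]^{1/p}\,d\sigma,
\]
using that $\kappa_*$ is comparable to $\kappa_j$ on $I_j$ (Lipschitz, $|I_j|\le\kappa_j/2$). This also covers the pieces where $\kappa_*\gtrsim1$, which contribute only the total length.

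The main obstacle is the single-piece estimate with the precise $[\log]^{1/p}$ loss uniformly in $1<p<\infty$ for nonsmooth arcs. Fourier decay must be extracted from the lower curvature bound alone, for example by writing the arc as a convex graph $y=g(t)$ with $g'$ increasing and $g''\ge\kappa_j$ as a measure, and applying a first-derivative van der Corput test adapted to monotone phases. The cutoff $\phi_j$ must also be chosen with derivative $\lesssim|I_j|^{-1}$ so that the endpoint terms carry the same scaling.
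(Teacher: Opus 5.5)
\textbf{The overall strategy works, but the single-piece bound is not proved.} Your architecture is sound and genuinely different from the paper: a Whitney cover of $\{\kappa_*>0\}$ by arcs $I_j$ with $|I_j|\eqsim\min(\kappa_j,1)$, the decay $|\widehat{\sigma_j}(\xi)|\lesssim(\kappa_j|\xi|)^{-1/2}$ from monotone turning, and $M_\sigma\le\sum_jM_{\sigma_j}$. The paper never localizes in space. It splits $\sigma$ at each frequency $2^j$ into $\nu_j^\pm$ according to the size of $\kappa_*$ relative to $2^{-j/4}$, pays the small-curvature part by its mass $m_j$, and closes with the bootstrap \Eqref{lc:eq:parent-absorption} for $p<2$ and \Lemref{lc:lem:nested-mass} for $p>2$. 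Your whole proof rests on $\|M_{\sigma_j}\|_{p\to p}\lesssim|I_j|[\log(e+\kappa_j^{-1})]^{1/p}$, and the mechanism you give for it fails. Let $N$ satisfy $2^N\eqsim(\kappa_j|I_j|^2)^{-1}$. The $N$ undecayed levels are \emph{frequency} levels, not $N$ averages. The operator to bound is $\sup_{k\in\Z}|(\sigma_j)_{2^k}*(L_{N-k}-L_{-k})f|$, with the scale selector ranging over all of $\Z$. So the finite-family bound $\|\max_{k\in I}|A_kf|\|_p\le|I|^{1/p}\max_k\|A_k\|_{p\to p}$ does not apply. There is also no $L^1$ endpoint of size $N|I_j|$: for $0\le f\in L^1$ with $\int f>0$, the norms $\|(\sigma_j)_{2^k}*(L_{N-k}-L_{-k})f\|_1$ do not decay as $k\to\infty$, so their sum over $k$ is infinite. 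What your square function and the trivial bound actually give is an $L^2(\ell^2)$ norm $\lesssim N^{1/2}|I_j|$ and an $L^\infty(\ell^\infty)$ norm $\lesssim|I_j|$. That yields $N^{1/p}|I_j|$ only for $p\ge2$. For $1<p<2$ you have no endpoint at all. The same problem affects the levels $l>N$: the argument of \cite{DR} below $2$ is a bootstrap whose constants depend on $l$ and on the maximal operator itself, so you cannot simply assert that these levels sum to $O(N^{1/p}|I_j|)$. Your fallback of dominating the low levels by $N$ single averages at each point fails for the same reason.

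\textbf{The missing ingredient is a weak-type $(1,1)$ endpoint with the right dependence on $N$.} Let $K$ be the convolution kernel of $f\mapsto\sigma_j*(L_N-L_0)f$. Then $(\sigma_j)_{2^k}*(L_{N-k}-L_{-k})$ has kernel $K_{2^k}=2^{-2k}K(2^{-k}\cdot)$. Moreover $\|K\|_1\lesssim|I_j|$, $\int|K(x-y)-K(x)|\dd x\lesssim|I_j|\min(1,2^N|y|)$, and $K$ has rapidly decaying tails outside $B(0,\operatorname{diam}\Omega)$, since $0\in\Omega$. In the H\"ormander integral $\sum_k\int_{|x|>2|y|}|K_{2^k}(x-y)-K_{2^k}(x)|\dd x$, only the $O(N)$ scales with $|y|\lesssim2^k\lesssim2^N|y|$ contribute $O(|I_j|)$ each, and the remaining scales sum to $O(|I_j|)$. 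Vector-valued Calder\'on--Zygmund theory, with the $L^2(\ell^2)$ bound, then gives weak type $(1,1)$ with constant $O(N|I_j|)$. Marcinkiewicz interpolation against the $L^\infty$ bound $O(|I_j|)$ yields $C_pN^{1/p}|I_j|$ for every $1<p<\infty$. At a level $l>N$ the same count gives a weak constant $O(l|I_j|)$ against the $L^2$ constant $O(2^{-(l-N)/2}|I_j|)$. So for $p<2$ these levels sum to $\lesssim_pN^{2/p-1}|I_j|\le N^{1/p}|I_j|$, and for $p\ge2$ interpolation with $L^\infty$ gives $O_p(|I_j|)$. With this filled in, your localization does prove the theorem, and arguably more simply than the paper. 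The quantity $\int W\dd\sigma$ is additive over arcs, so the triangle inequality costs nothing. The paper's frequency-dependent splitting, by contrast, is what forces its bootstrap and Lorentz-type bookkeeping.
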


\begin{remark}
For a $C^{2,\gamma}$ boundary, $0<\gamma\le1$, the curvature satisfies
$\kappa_*\gtrsim_{\Omega,\gamma}\kappa^{1/\gamma}$. Thus the familiar
condition $\int_{\partial\Omega}[\log(e+\kappa^{-1})]^{1/p}\dd\sigma<\infty$
is sufficient, as verified in \Secref{lc:sec:turning-examples}.
The published necessity \Eqref{eq:published-necessary} gives the original
cap condition from the conclusion. The logarithmic power $1/p$ cannot
be uniformly lowered, by \Secref{lc:sec:power}.
\end{remark}

\subsection{Organization}\label{subsec:organization}
\Secref{sec:main-proofs} proves \Thmref{Bneg:thm:general-negative}
and \Thmref{cond:thm:cap-iff}. \Secref{sec:auxiliary} supplies the
Fourier estimates and geometric constructions used in those proofs.

\subsection{Notation}\label{subsec:notation}
The relation $X\lesssim Y$ means $X\le CY$, and $X\eqsim Y$ means
that both comparison inequalities hold. Unless stated otherwise,
constants may depend on the exponent, the given domain and the smooth
cutoffs. They are independent of the function, finite
dilation families and measurable scale selectors.
The cap depth $\delta$ varies throughout the estimates.
For normed or quasi-normed spaces $X,Y$, we write
\[
       \|T\|_{X\to Y}=\sup_{f\ne0}\frac{\|Tf\|_Y}{\|f\|_X}.
\]
We abbreviate $\|T\|_{L^p(\R^2)\to L^p(\R^2)}$ by
$\|T\|_{p\to p}$. For a positive measure $\eta$ and finite $I\subseteq\Z$, set
$M_{\eta,I}f=\max_{k\in I}|\eta_{2^k}*f|$ and
$M_\eta f=\sup_{k\in\Z}|\eta_{2^k}*f|$.
$M_{\rm HL}$ denotes the Hardy--Littlewood maximal operator.
Reflection of a measure is denoted by
$\widetilde\eta$, the dilate $\eta_s$ is its pushforward under
$y\mapsto sy$, and $L^p(x,k)$ uses counting measure in $k$.
Our Fourier convention is
$\widehat\eta(\xi)=\int e^{-2\pi i y\cdot\xi}\dd\eta(y)$, and
$\log^+t=\max\{0,\log t\}$.

\section{Proofs of the main results}\label{sec:main-proofs}
\subsection{Proof of \texorpdfstring{\Thmref{Bneg:thm:general-negative}}{Theorem 1.1}}
\begin{proof}[Proof of \Thmref{Bneg:thm:general-negative} below two]
Fix $1<p=p_0<(\sqrt{13}-1)/2$ and $a=1$ in
\Secref{Bneg:subsec:negative-geometry}. Translate one interior point to zero
once, and retain the one fixed enlargement $L_0$. The constructed oval
is smooth and strictly convex, has exactly one curvature-zero point, and
satisfies $\mathcal C_{p_0}<\infty$.

Let $m_n$ be the unscaled hole radius. At each actual centre choose a
symmetric tangent-coordinate subarc with
\[
 \mu_n=L_0m_n/4\quad\text{and}\quad
 \mathfrak d_n\le C L_0\varepsilon_nm_n^2,\qquad
 \varepsilon_n=e^{-m_n^{-p_0}}.
\]
Its arclength density is at least $ds$, and its exact tangent is
$A_n^{-1}v_i/|A_n^{-1}v_i|$. The mass in
\Lemref{Bneg:lem:rational-lattice-scale-lower} is therefore $\mu_n$. Set
\[
 \begin{gathered}
 K_n^{\rm phys}=\left\lfloor\frac{m_n^{-p_0}}{2\log2}\right\rfloor,\qquad
 k_{0,n}=\max\left\{0,\left\lceil\log_2(C_1Q_n/\mu_n)\right\rceil\right\},\\
 J_n=K_n^{\rm phys}-k_{0,n}+1\quad\text{and}\quad r_n^{\rm in}=\frac{c_1}{N_nJ_nQ_n}.
 \end{gathered}
\]
The established growth estimates give
\begin{align*}
 2^{k_{0,n}}\mu_n&\ge C_1Q_n,\\
 \log M_n&=O(t_n^{-a}+n),\qquad
 \log N_n=O(t_n^{-a}/n^2),\\
 \log(1/m_n)&=O(\log N_n+n)=o(m_n^{-p_0}),\\
 k_{0,n}&=o(m_n^{-p_0})\quad\text{and}\quad J_n\eqsim m_n^{-p_0},\\
 2\|A_n\|2^{K_n^{\rm phys}}\mathfrak d_n
 &\le C L_0M_nm_n^2e^{-m_n^{-p_0}/2}
 \le r_n^{\rm in}/100.
\end{align*}
The last inequality follows since the logarithms of all prefactors and
of $1/r_n^{\rm in}$ are $o(m_n^{-p_0})$.

For the fixed inball radius $h_0>0$,
\Lemref{Bneg:cor:interior-origin-normal-phase} yields
\[
 |n_i\cdot A_nz_i|\ge h_0|A_n^Tn_i|\ge h_0M_nQ_n,
 \qquad
 2^{k_{0,n}}\min_i|n_i\cdot A_nz_i|\ge1.
\]
Thus all hypotheses of \Lemref{Bneg:lem:rational-lattice-scale-lower} hold at
the actual curve points. Apply it on the consecutive physical indices
$k_{0,n}\le k\le K_n^{\rm phys}$, and choose its scalar
$S_n\in[1,2]$ for each block.
For bounded truncations of the resulting single periodic indicator,
\begin{align*}
 \|M_\sigma\|_{L^{p_0}\to L^{p_0,\infty}}^{\mathrm{restricted}}
 &\ge c\mu_n(N_nJ_n)^{2/p_0-1}Q_n^{2/p_0-2}\\
 &\eqsim b_n^{p_0-1}N_n^{3/p_0-p_0-1}
 \longrightarrow\infty,
\end{align*}
because
\[
 m_n\eqsim b_n/N_n,\qquad Q_n\eqsim N_n^{1/2},\qquad
 J_n\eqsim m_n^{-p_0},\qquad
 |\log b_n|=O(n),\qquad \frac{\log N_n}{n}\longrightarrow\infty.
\]
The exponent $3/p_0-p_0-1$ is positive for precisely this range.
The logarithm of the lower bound is
$(3/p_0-p_0-1)\log N_n+O_{p_0}(n)\to\infty$.
The factor $Q_n$ is retained. Each bounded indicator supplies every
convolution in its finite family. On the union of the successful sets,
choose one successful arc and its physical index. The complete positive
average dominates that arc average. The domain and origin are fixed
throughout, and the full bilateral supremum contains every such finite
family.
\end{proof}

\subsection{The above-two case of \texorpdfstring{\Thmref{Bneg:thm:general-negative}}{Theorem 1.1}}
\begin{proof}[Proof of \Thmref{Bneg:thm:general-negative} above three]
Fix $p_*=p_0>3$ and $s_0=(p_*+3)/2$, so $3<s_0<p_*$. Apply \Secref{Bneg:sec:geometry} and \Secref{Bneg:sec:Ctest} with
$a=1/2$ and floor exponent $s_0$. They give one smooth strictly convex
domain with one curvature-zero point and an interior origin fixed before
phase programming. By \Lemref{Bneg:lem:C-cap} and
\Eqref{Bneg:Ctest:ten-thirds-divergence},
\[
 \begin{gathered}
 \mathcal C_{p_*}(\Omega)<\infty,\\
 \|M_\sigma\|_{L^{p_*}\to L^{p_*,\infty}}^{\mathrm{restricted}}=\infty.
 \end{gathered}
\]
\end{proof}

\subsection{Proof of the geometric sufficient condition}
We split the original measure where its curvature lower envelope is
small. The curvature pays the Fourier estimate, and the single integral
pays the remaining mass at every depth.
\begin{proof}[Proof of \Thmref{cond:thm:cap-iff}]
Write $P=\sigma(\partial\Omega)$ and use an arclength parameter $s$ on
$[0,P]$, with the endpoints identified only to describe the closed curve.
Set $p'=p/(p-1)$, $w=\kappa_*$ and
$W=[\log(e+w^{-1})]^{1/p}$. The triangle inequality
makes $w$ $1$-Lipschitz and $w\le2\pi/P+P/2$. Lebesgue differentiation
gives $w\le\kappa$ a.e., hence $\mu\ge w\dd s$.
The hypothesis is exactly $\int_0^P W(s)\dd s<\infty$.
Let $L_l$ have multiplier $\chi(2^{-l}\xi)$, where $\chi$ is smooth,
real and radial, one on the unit ball and zero outside radius two.
Write $P_l=L_l-L_{l-1}$ and set $\delta_j=2^{-j/4}$. Fix a smooth
nondecreasing $\psi:\R\to[0,1]$, zero on $(-\infty,1/2]$ and one on
$[1,\infty)$. The original positive measures are
\begin{equation}\label{lc:eq:actual-split}
 \nu_j^+=\psi(w/\delta_j)\sigma,\qquad
 \nu_j^-=\sigma-\nu_j^+,\qquad m_j=\nu_j^-(\R^2).
\end{equation}
Here $\nu_j^+$ increases, $\nu_j^-$ decreases and
$m_j\le\sigma\{w<\delta_j\}$. The finite logarithmic integral gives
$\sigma\{w=0\}=0$, hence $m_j\downarrow0$ by dominated convergence.
For $1<p\le2$, Minkowski's inequality in $\ell^p$ gives
\begin{equation}\label{lc:eq:lorentz-small}
 \begin{split}
 (\sum_jm_j^p)^{1/p}
 &\le\int_0^P\Big(\sum_j\mathbf1_{\{w(s)<\delta_j\}}\Big)^{1/p}\dd s\\
 &\lesssim_p\int_0^P W(s)\dd s.
 \end{split}
\end{equation}
For $p>2$, let $W^*$ be the decreasing
rearrangement of $W$. The complete groups
\[
 J_l=\{j:P2^{-l-1}<m_j\le P2^{-l}\}
 \quad\text{and}\quad K_l=\#J_l
\]
satisfy $K_l\lesssim[W^*(P2^{-l-1})]^p$. Therefore
\begin{equation}\label{lc:eq:lorentz-large}
 \begin{split}
 \Big[\sum_lK_l^{1/(p-1)}(P2^{-l})^{p'}\Big]^{1/p'}
 &\lesssim_p\sum_l P2^{-l}W^*(P2^{-l-1})\\
 &\le4\int_0^P W(s)\dd s.
 \end{split}
\end{equation}
The disjoint intervals $[P2^{-l-2},P2^{-l-1}]$ pay the last sum.
Let $\tau$ be the a.e. unit tangent and write
\begin{equation}\label{lc:eq:grazing-mass}
 G_j(e)=\int_{\{|e\cdot\tau(s)|\le16\delta_j^2\}}
                      \psi(w(s)/\delta_j)\dd s.
\end{equation}
For large $j$, actual angle integration gives
\begin{equation}\label{lc:eq:curvature-grazing}
 G_j(e)\le2\delta_j^{-1}
       \int_{\{|e\cdot\tau|\le16\delta_j^2\}}\kappa\dd s
       \lesssim_\Omega\delta_j^{-1}2^{-j/2}=2^{-j/4}.
\end{equation}
Indeed $w>\delta_j/2$ on the mask support and $w\le\kappa$ a.e.
Monotone turning bounds the curvature integral by the angular width.
Corner points have zero arclength, and singular turn only strengthens
$\mu\ge\kappa\dd s$. The perimeter pays the finite prefix.
Fix finite $I\subseteq\Z$ and first restrict to $1\le j\le N$.
Compactness of the original support gives the lowpass bound
\[
 \sup_{k\in I}|\sigma_{2^k}*L_{-k}f|
                         \lesssim_\Omega P M_{\rm HL}f.
\]
Suppose $1<p<2$. Set $A_I=\|M_{\sigma,I}\|_{p\to p}<\infty$ by
finite-family Young. Freeze disjoint measurable $E_k$, unit complex
phases $\omega_k$, and one function $h$. Write
\[
 h_k=\omega_kh\mathbf1_{E_k},\qquad
 U_{j,k}^\pm=\widetilde{\nu_j^\pm}_{2^k}*P_{j-k}h_k,
 \qquad a_j=\sup_eG_j(e)+C_\Omega\delta_j.
\]
Zero-mass rows are omitted. \Lemref{lc:lem:masked-fourier}, Plancherel
and radial Littlewood--Paley analysis give
\begin{equation}\label{lc:eq:parent-endpoints}
 \|\{m_j^{-1}U_{j,k}^-\}\|_{L^2(\ell^2)}\lesssim\|h\|_2
 \quad\text{and}\quad
 \|\{a_j^{-1}U_{j,k}^+\}\|_{L^2(\ell^2)}\lesssim\|h\|_2.
\end{equation}
For the other endpoint, Jensen and $\nu_j^-\le\sigma$, paired
with a nonnegative $v\in L^p$, yield
\[
 \begin{split}
 \sum_{j,k}\int m_j^{-1}|U_{j,k}^-|^2v
 &\le\sum_{j,k}\int(\nu_{j,2^k}^-*v)|P_{j-k}h_k|^2\\
 &\le\int M_{\sigma,I}v\sum_{j,k}|P_{j-k}h_k|^2
 \lesssim_p A_I\|v\|_p\|h\|_{2p'}^2.
 \end{split}
\]
Hilbert-valued radial analysis \cite{Stein} precedes disjointness of
$h_k$. The plus array has the same estimate with weight $P^{-1}$.
Interpolate with $L^2$ proportion $p-1$. The resulting weights are
$m_j^{-p/2}$ and $a_j^{-(p-1)}P^{-(2-p)/2}$. Weighted Cauchy--Schwarz
on $j-k=l$, followed by radial synthesis, gives
\begin{equation}\label{lc:eq:parent-synthesis}
 \begin{split}
 \|\sum_{j,k}U_{j,k}^-\|_{p'}
 &\lesssim_p(\sum_jm_j^p)^{1/2}A_I^{(2-p)/2}\|h\|_{p'},\\
 \|\sum_{j,k}U_{j,k}^+\|_{p'}
 &\lesssim_p(P^{2-p}\sum_ja_j^{2p-2})^{1/2}
                                      A_I^{(2-p)/2}\|h\|_{p'}.
 \end{split}
\end{equation}
Each summand retains its radial support. Since
$\sum_ja_j^{2p-2}<\infty$, duality,
\Eqref{lc:eq:lorentz-small} and the radial limit below imply
\begin{equation}\label{lc:eq:parent-absorption}
 \begin{split}
 A_I&\le C_{\Omega,p}\Big[1+
           \Big(\int_0^P W\dd s\Big)^{p/2}\Big]A_I^{(2-p)/2}
           +C_{\Omega,p}\\
 &\Longrightarrow\quad A_I<\infty
       \quad\text{uniformly in }I.
 \end{split}
\end{equation}
At $p=2$, first use $\nu_j^++\nu_j^-=\sigma$ at the same $j$.
On its annulus, $|\widehat\sigma(Re)|\le G_j(e)+C_\Omega\delta_j+m_j$.
For fixed $k,\xi$ only boundedly many $j$ contribute, and for fixed
$j,\xi$ only boundedly many $k$ contribute. The direction
$e=\xi/|\xi|$ remains fixed in the complete sum, so
\[
 \sum_{k\in I}\|\sum_{j=1}^N\sigma_{2^k}*P_{j-k}f\|_2^2
 \lesssim_\Omega[1+(\int_0^P W\dd s)^2]\|f\|_2^2.
\]
For $p>2$, \Lemref{lc:lem:nested-mass} and \Eqref{lc:eq:lorentz-large}
bound the selected complementary sum by
$C_p(\int_0^P W\dd s)\|f\|_p$.
For the increasing $\nu_j^+$, Abel summation as in
\Eqref{lc:eq:nested-abel} bounds every selected partial kernel by $CP$
in total variation. Its $L^2$ norm is bounded by
$C_\Omega[1+\sup_e\sum_jG_j(e)^2]^{1/2}$ by
\Eqref{lc:eq:same-ray-energy}. Scalar $2/\infty$ interpolation
\cite{BL} consequently gives a bound independent of $I,N$ and the
selector, by \Eqref{lc:eq:curvature-grazing}.
Finally the exact original identity is
\begin{equation}\label{lc:eq:radial-original-limit}
 \begin{split}
 \sum_{j=1}^N(\nu_j^++\nu_j^-)_{2^k}*P_{j-k}f
 &=\sigma_{2^k}*(L_{N-k}-L_{-k})f\\
 &\longrightarrow\sigma_{2^k}*(f-L_{-k}f)\quad\text{in }L^p.
 \end{split}
\end{equation}
The approximate identity and Young with the full mass $P$ give
simultaneous convergence for finite $I$. Thus the truncated bounds
apply to the original finite maximum before \Eqref{lc:eq:parent-absorption}.
Increasing $I$ to $\Z$ and applying Fatou proves the theorem.
\end{proof}

\section{Proofs of the auxiliary estimates}\label{sec:auxiliary}
\subsection{Fourier decay and nested positive measures}
The following estimate keeps the full directional energy on one ray.
\begin{lemma}\label{lc:lem:masked-fourier}
Write $\delta_j=2^{-j/4}$. The measures in \Eqref{lc:eq:actual-split} satisfy
\begin{equation}\label{lc:eq:masked-annular}
 \sup_{2^{j-2}\le R\le2^{j+2}}|\widehat{\nu_j^+}(Re)|
                          \le G_j(e)+C_\Omega2^{-j/4}.
\end{equation}
For finite $I\subseteq\Z$, $N\ge1$, disjoint measurable $E_k$ and
unit phases $\omega_k$, the same common function $f$ satisfies
\begin{equation}\label{lc:eq:same-ray-energy}
 \|\sum_{k\in I}\mathbf1_{E_k}\omega_k
       \sum_{j=1}^N\nu_{j,2^k}^+*P_{j-k}f\|_2^2
 \lesssim_\Omega[1+\sup_e\sum_jG_j(e)^2]\|f\|_2^2.
\end{equation}
\end{lemma}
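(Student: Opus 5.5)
We prove \Eqref{lc:eq:masked-annular} by stationary phase on the arclength parameter, splitting the boundary by how nearly tangent it is to the level lines of $y\mapsto y\cdot e$. Write
\[
\widehat{\nu_j^+}(Re)=\int_0^P\psi(w(s)/\delta_j)e^{-2\pi iR\,e\cdot\gamma(s)}\dd s,
\]
where $\gamma$ is the arclength parametrisation. Fix a smooth partition of unity in the variable $e\cdot\tau(s)$. Its first piece $\phi_0$ is supported where $|e\cdot\tau|\le16\delta_j^2$ and equals one where $|e\cdot\tau|\le8\delta_j^2$. The first piece is bounded trivially by $G_j(e)$, since $\psi\le1$. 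The remaining region $|e\cdot\tau|\ge8\delta_j^2$ we decompose dyadically into $|e\cdot\tau|\eqsim2^m\delta_j^2$, with $m\ge3$ and $2^m\delta_j^2\lesssim1$.

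On each dyadic piece we integrate by parts once. The phase derivative is $2\pi R\,e\cdot\tau(s)$, of size $\gtrsim R2^m\delta_j^2\gtrsim2^{j/2+m}$ for $R\eqsim2^j$. The integration by parts produces three terms.
\begin{itemize}
\item The boundary terms at corners are controlled by the number of pieces. Since $e\cdot\tau$ is monotone on each of the two arcs between the supporting points, each dyadic shell is a bounded union of intervals.
\item The derivative falling on $1/(e\cdot\tau)$ gives $\dd\mu$ divided by $R(e\cdot\tau)^2$. The turning of $\tau$ inside a shell is at most $2^m\delta_j^2$ in angle, so this term is $\lesssim 1/(R2^m\delta_j^2)$.
\item The derivative falling on $\psi(w/\delta_j)$ gives $\delta_j^{-1}|w'|\le\delta_j^{-1}$, because $w$ is $1$-Lipschitz. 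It is integrated over the shell length, which we bound by $\delta_j^{-1}$ times the turning, using $\kappa\ge w>\delta_j/2$ on the support. This term is therefore $\lesssim\delta_j^{-2}2^m\delta_j^2/(R2^m\delta_j^2)=1/(R\delta_j^2)$.
\end{itemize}
Summing over the $O(j)$ dyadic shells gives roughly $j2^{-j/2}\lesssim 2^{-j/4}$. Corners and singular turning only help, since we use $\mu\ge\kappa\dd s$ and treat $\tau$ as a function of bounded variation; the integration by parts is done in the Stieltjes sense, pairing $\dd\tau$ with $\mu$. The cutoffs $\phi_m$ are applied as functions of the tangent angle, so their derivatives cost at most the turning divided by $2^m\delta_j^2$, which is again bounded per shell.

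For \Eqref{lc:eq:same-ray-energy} we first remove the sets $E_k$. We bound the left side by $\sum_k\|\sum_j\nu^+_{j,2^k}*P_{j-k}f\|_2^2$, which holds because the $E_k$ are disjoint and $|\omega_k|=1$. By Plancherel, the $k$-th term is
\[
\int\Bigl|\sum_j\widehat{\nu_j^+}(2^k\xi)\,\widehat{P_{j-k}}(\xi)\Bigr|^2|\widehat f(\xi)|^2\dd\xi .
\]
For fixed $\xi$ only boundedly many $j$ have $2^{j-k}\eqsim|\xi|$, and there $R=2^k|\xi|\in[2^{j-2},2^{j+2}]$. By \Eqref{lc:eq:masked-annular} the inner sum is therefore bounded by $\sum_{j:\,2^{j}\eqsim2^k|\xi|}(G_j(e)+C2^{-j/4})$, with $e=\xi/|\xi|$ fixed. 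Summing the square over $k$, and noting that each $j$ is hit by $O(1)$ values of $k$ for fixed $\xi$, gives
\[
\lesssim\sum_j\bigl(G_j(e)^2+2^{-j/2}\bigr)\le\sup_e\sum_jG_j(e)^2+C .
\]
Integrating against $|\widehat f|^2$ finishes the proof.

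The main obstacle is the integration by parts in the non-smooth setting. Here $\tau$ is only of bounded variation, $\psi(w/\delta_j)$ is merely Lipschitz, and the shell lengths must be controlled through $w\le\kappa$ rather than through pointwise curvature. We would handle this by working with the turning measure directly, and by restricting to $j$ large, absorbing small $j$ into the constant.
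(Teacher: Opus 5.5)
Your proof of \Eqref{lc:eq:same-ray-energy} is the paper's argument: disjointness of the $E_k$, Plancherel, and bounded overlap in $j$ at fixed $k,\xi$ and in $k$ at fixed $j,\xi$, with $e=\xi/|\xi|$ fixed. For \Eqref{lc:eq:masked-annular} you take a different but valid route.

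The paper cuts $\partial\Omega$ into finitely many small-slope convex graphs and uses that $\phi'=R(e_1+e_2\gamma')$ is monotone. It pays the single interval $\{|\phi'|\le\sqrt R\}\subseteq\{|e\cdot\tau|\le16\delta_j^2\}$ by $G_j(e)$. On the at most two remaining intervals it applies the first-derivative test with a BV amplitude after the substitution $u=\phi(x)$, which gives $(1+\delta_j^{-1})R^{-1/2}\lesssim2^{-j/4}$ in one step.

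You instead stay in arclength on the whole closed curve, split the non-grazing region dyadically in $|e\cdot\tau|$, and integrate by parts once per shell. This costs a partition of unity and a sum over $O(j)$ shells, but needs no charts. Your use of $\kappa\ge w>\delta_j/2$ where $\psi'(w/\delta_j)\ne0$ gives the sharper error $O(j2^{-j/2})$. That refinement is optional: the crude shell length $P$ already gives $\sum_{m\ge3}P\delta_j^{-1}/(R2^m\delta_j^2)\lesssim P/(R\delta_j^3)\eqsim2^{-j/4}$, the same balance as in the paper.

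Two details of your BV bookkeeping should be repaired.
\begin{enumerate}
\item $e\cdot\tau$ vanishes at the supporting points of $\ell^\pm(e)$. It is monotone on the two arcs between the points where $\tau=\pm e$, not on the arcs between the supporting points. Your conclusion that each shell is a bounded union of intervals still holds.
\item At a corner whose tangent jump leaves a shell, the atom of $\mu$ is not bounded by the shell's angular width. There may also be infinitely many corners. So neither ``turning inside a shell'' nor ``number of pieces'' is the right control. Instead, integrate by parts once over the closed curve, which has no endpoint terms, with amplitude (up to a constant) $\phi_m(e\cdot\tau)\psi(w/\delta_j)/(R\,e\cdot\tau)$. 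The function $F_m(t)=\phi_m(t)/t$, composed with the piecewise monotone $e\cdot\tau(s)$, has variation $\lesssim\Var F_m\lesssim(2^m\delta_j^2)^{-1}$. This absorbs jumps and singular turning automatically, just as monotonicity of $\phi'$ controls the variation of $1/\phi'$ in the paper's first-derivative test.
\end{enumerate}
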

\begin{proof}
Partition the actual boundary into finitely many small-turn arcs of
length at most $P/4$. Large corner jumps are endpoints. Each arc is a
small-slope convex Lipschitz graph $(x,\gamma(x))$, with monotone
$\gamma'$ and arclength density $a=(1+\gamma'^2)^{1/2}$ of bounded
supremum and variation. Since $w$ is $1$-Lipschitz,
\[
 \|a\psi(w/\delta_j)\|_\infty+
 \Var[a\psi(w/\delta_j)]\lesssim_\Omega 1+\delta_j^{-1}.
\]
For $\phi(x)=R(e_1x+e_2\gamma(x))$, $\phi'$ is monotone.
The set $|\phi'|\le\sqrt R$ has $|e\cdot\tau|\le R^{-1/2}$,
so its absolute integral is paid once by $G_j(e)$.
On at most two complementary intervals $\phi'$ is one-signed and
$|\phi'|\ge\sqrt R$. The substitution $u=\phi(x)$ and BV
integration, including the endpoints, give
\[
 |\int_{\{|\phi'|>\sqrt R\}} e^{-2\pi i\phi(x)}a(x)\psi(w(x)/\delta_j)\dd x|
 \lesssim_\Omega(1+\delta_j^{-1})R^{-1/2}
 \lesssim_\Omega2^{-j/4}.
\]
Positive-length boundary levels may be included in the grazing set.
Summing the disjoint chart interiors proves \Eqref{lc:eq:masked-annular}.
Let $p_l$ be the multiplier of $P_l$. Disjointness and Plancherel give
\[
 \begin{split}
 \|\sum_{k\in I}\mathbf1_{E_k}\omega_k
       \sum_{j=1}^N\nu_{j,2^k}^+*P_{j-k}f\|_2^2
 &\le\sum_{k\in I}\|\sum_{j=1}^N\nu_{j,2^k}^+*P_{j-k}f\|_2^2\\
 &\lesssim\int\sum_{j,k}
 |\widehat{\nu_j^+}(2^k\xi)p_{j-k}(\xi)\widehat f(\xi)|^2\dd\xi.
 \end{split}
\]
For fixed $k,\xi$, only boundedly many $j$ occur. For fixed $j,\xi$,
only boundedly many $k$ occur, with $2^k|\xi|$ in the $j$th annulus.
The direction $\xi/|\xi|$ stays fixed throughout the sum.
Equation \Eqref{lc:eq:masked-annular} proves \Eqref{lc:eq:same-ray-energy}.
\end{proof}

Above two, nesting controls the remaining mass with the same selector.
\begin{lemma}\label{lc:lem:nested-mass}
Let $p>2$ and $\eta_{j+1}\le\eta_j$ be positive measures with
$m_j=\eta_j(\R^2)\downarrow0$ and $m_j\le P$. Write
\[
 J_l=\{j:P2^{-l-1}<m_j\le P2^{-l}\}
 \quad\text{and}\quad K_l=\#J_l.
\]
If these counts are finite, then uniformly in finite $I\subseteq\Z$ and $N$,
\begin{equation}\label{lc:eq:nested-mass-bound}
 \|\max_{k\in I}|\sum_{j=1}^N(\eta_j)_{2^k}*P_{j-k}f|\|_p
 \lesssim_p
 (\sum_{l\ge0}K_l^{1/(p-1)}(P2^{-l})^{p'})^{1/p'}\|f\|_p.
\end{equation}
\end{lemma}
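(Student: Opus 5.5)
The plan is Abel summation in $j$, using the nested structure $\eta_{j+1}\le\eta_j$. Write $\beta_j=\eta_j-\eta_{j+1}\ge0$ for $j<N$ and $\beta_N=\eta_N$. Then $\eta_j=\sum_{i\ge j}\beta_i$, and hence
\[
 \sum_{j=1}^N(\eta_j)_{2^k}*P_{j-k}f
 =\sum_{i=1}^N(\beta_i)_{2^k}*\sum_{j=1}^{i}P_{j-k}f
 =\sum_{i=1}^N(\beta_i)_{2^k}*(L_{i-k}-L_{-k})f .
\]
Each $L_{i-k}$ is dominated by $M_{\rm HL}$, and each $\beta_i$ is a positive measure. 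So every summand is bounded pointwise by a positive average of $|f|$ at the scales $2^k$ and $2^{k-i}$, but with no gain in $i$. This crude route gives only the bound $\sum_i|\beta_i|\le m_1\le P$ times a maximal operator attached to $\eta_1$ itself, which is circular. Summation by parts alone is therefore not enough.

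Instead I would group the indices by mass, using the blocks $J_l$. Within a block, $m_j\eqsim P2^{-l}$. On $J_l$ I would estimate the partial sum $S_{l,k}=\sum_{j\in J_l}(\eta_j)_{2^k}*P_{j-k}f$ by interpolating two endpoints. At $L^\infty$, each term is bounded by $m_j\|P_{j-k}f\|_\infty\lesssim P2^{-l}\|f\|_\infty$. Summing naively over $j\in J_l$ costs $K_l$, but Abel summation inside the block makes the kernel $\sum_{j\in J_l}(\eta_j)_{2^k}*\check p_{j-k}$ have total variation $\lesssim P2^{-l}$, uniformly in $k$. At $L^2$, I would use the square function in $k$ together with almost orthogonality of $P_{j-k}$ over the $K_l$ indices: Cauchy--Schwarz in $j\in J_l$ gives $\|\max_k|S_{l,k}|\|_2\le\|(\sum_k|S_{l,k}|^2)^{1/2}\|_2\lesssim K_l^{1/2}P2^{-l}\|f\|_2$. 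Interpolating between these (in the vector-valued $\ell^\infty_k$ sense, with the $L^2$ bound passing through $\ell^2_k$) gives
\[
 \|\max_k|S_{l,k}|\|_p\lesssim K_l^{1/p}P2^{-l}\|f\|_p .
\]

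Summing these block bounds over $l$ naively yields $\sum_lK_l^{1/p}P2^{-l}$. This is weaker than the claimed $\ell^{p'}$-type expression $(\sum_lK_l^{1/(p-1)}(P2^{-l})^{p'})^{1/p'}$, which equals $\|\{K_l^{1/p}P2^{-l}\}\|_{\ell^{p'}}$. Indeed $(K_l^{1/p}P2^{-l})^{p'}=K_l^{1/(p-1)}(P2^{-l})^{p'}$. So the remaining task, and the main obstacle, is to replace the $\ell^1$ sum over blocks by an $\ell^{p'}$ sum. I would do this by duality: pair $\sum_l\max_k|S_{l,k}|$ against $g\in L^{p'}$ with a measurable selector $k(x)$, and note that the blocks $l$ occupy disjoint frequency ranges $j-k\in J_l-k$ for fixed $k$. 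Then I would apply Hölder in $l$ after a Littlewood--Paley decoupling. This uses $\|(\sum_l|\Delta_lf|^p)^{1/p}\|_p\lesssim\|f\|_p$ for $p\ge2$, where $\Delta_l$ is the frequency projection onto $\bigcup_{j\in J_l}$ annuli relative to the selected scale. The difficulty is that these projections depend on $k(x)$. I would handle this by freezing the disjoint sets $E_k=\{k(x)=k\}$, as in the proof of \Thmref{cond:thm:cap-iff}, and running the $L^2$ endpoint with the disjointness of $E_k$. I would then interpolate the resulting mixed-norm bounds $L^\infty(\ell^1_l)$ and $L^2(\ell^2_l)$, with weights $P2^{-l}$ and $K_l^{1/2}P2^{-l}$, to land in the $\ell^{p'}_l$ weight above. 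Uniformity in $I$ and $N$ follows because no constant depends on them.
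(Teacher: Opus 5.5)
Your endpoint ingredients are right: Abel summation inside each consecutive group $J_l$ for $L^\infty$, Plancherel with bounded overlaps for $L^2$. You also correctly identify the target as $\|a\|_{\ell^{p'}}$ with $a_l=K_l^{1/p}P2^{-l}$. There is a genuine gap, though. The decisive step, passing from $\sum_la_l$ to $\|a\|_{\ell^{p'}}$, is only gestured at, and the mechanism you propose for it does not work.

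Write $T_lf=\sum_{k\in I}\mathbf 1_{E_k}\omega_kS_{l,k}$. The Littlewood--Paley decoupling fails because there is no fixed projection $\Delta_l$ with $T_l=T_l\Delta_l$. On $E_k$ the group $J_l$ uses the annuli $|\xi|\approx2^{j-k}$, $j\in J_l$, and as $k$ runs over $I$ these annuli sweep through all frequencies. Freezing the $E_k$ does not change this.

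Interpolating the vector-valued map $f\mapsto\{T_lf\}_l$ between weighted $L^\infty(\ell^1_l)$ and $L^2(\ell^2_l)$ does not help either. It only controls $\{T_lf\}$ in $L^p(\ell^{p'}_l)$. With the endpoint constants you have, recovering $\sum_lT_lf$ by H\"older in $l$ returns at least $\sum_la_l$, whatever weights you choose. Your $L^2$ estimate is also block-by-block, which discards the orthogonality between different blocks. That cross-block orthogonality is exactly where the $\ell^{p'}$ gain comes from.

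The missing idea is to interpolate in the block coefficients rather than in the output. For arbitrary complex $c_l$ you have two bounds:
\begin{itemize}
\item $\|\sum_lc_lT_lf\|_\infty\lesssim\sum_l|c_l|P2^{-l}\|f\|_\infty$, by Abel summation inside each consecutive $J_l$, where the coefficient is constant;
\item $\|\sum_lc_lT_lf\|_2^2\lesssim\sum_l|c_l|^2K_l(P2^{-l})^2\|f\|_2^2$, by Plancherel over all $j$ simultaneously, using disjointness of the $E_k$, $|\widehat{\eta_j}|\le m_j$ and the two bounded overlaps.
\end{itemize}
The paper takes $c_l=u_l^{1-pz/2}$ with $u_l=(K_lP2^{-l})^{1/(p-1)}$. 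Both endpoint constants then reduce to $S=\sum_lK_l^{1/(p-1)}(P2^{-l})^{p'}$ on $L^\infty$ and $S^{1/2}$ on $L^2$. At $z=2/p$ every coefficient equals one, and Stein's analytic interpolation gives $S^{1-2/p}(S^{1/2})^{2/p}=S^{1/p'}$.

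Your last sentence may have meant interpolation of $(c,f)\mapsto\sum_lc_lT_lf$ between $\ell^1(P2^{-l})\times L^\infty$ and $\ell^2(K_l^{1/2}P2^{-l})\times L^2$, evaluated at $c\equiv1$. If so, that is this argument. But then the per-block $L^p$ bounds, the duality and the Littlewood--Paley decoupling play no role. The cross-block $L^2$ bound above must be stated and proved in their place.
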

\begin{proof}
Consecutive projection sums have uniformly $L^1$-bounded kernels.
Abel summation gives the actual identity
\begin{equation}\label{lc:eq:nested-abel}
 \begin{split}
 \sum_{j=a}^b(\eta_j)_{2^k}*\check p_{j-k}
 &=(\eta_b)_{2^k}*\sum_{j=a}^b\check p_{j-k}\\
 &\quad+\sum_{j=a}^{b-1}(\eta_j-\eta_{j+1})_{2^k}
                                      *\sum_{h=a}^j\check p_{h-k}.
 \end{split}
\end{equation}
Positive differences give total variation at most
$C[m_b+\sum_{j=a}^{b-1}(m_j-m_{j+1})]=Cm_a$.
For increasing positive measures of mass at most $P$, the analogous
first-term and suffix-sum identity gives $CP$.
Assume the sum in \Eqref{lc:eq:nested-mass-bound} is finite and positive,
and omit empty groups and zero rows. Choose
\begin{equation}\label{lc:eq:analytic-weights}
 \begin{gathered}
 u_l=(K_lP2^{-l})^{1/(p-1)},\\
 \sum_lP2^{-l}u_l=\sum_lK_l(P2^{-l})^2u_l^{2-p}
               =\sum_lK_l^{1/(p-1)}(P2^{-l})^{p'}.
 \end{gathered}
\end{equation}
The groups use all $j$, independently of $N$. Freeze disjoint $E_k$
and unit phases $\omega_k$. For $0\le\Re z\le1$ set
\begin{equation}\label{lc:eq:analytic-family}
 T_zf=\sum_{k\in I}\mathbf1_{E_k}\omega_k\sum_lu_l^{1-pz/2}
       \sum_{\substack{j\in J_l\\j\le N}}(\eta_j)_{2^k}*P_{j-k}f.
\end{equation}
Each group is consecutive. Its coefficient on $\Re z=0$ has modulus
$u_l$ and is constant in $j$. Equation \Eqref{lc:eq:nested-abel} gives
the first bound below. On $\Re z=1$, Plancherel, disjoint selectors,
$|\widehat\eta_j|\le m_j$ and the two radial overlaps give the second:
\begin{equation}\label{lc:eq:analytic-endpoints}
 \begin{split}
 \|T_{it}\|_{\infty\to\infty}
 &\lesssim\sum_lK_l^{1/(p-1)}(P2^{-l})^{p'},\\
 \|T_{1+it}\|_{2\to2}^2
 &\lesssim\sum_lK_l^{1/(p-1)}(P2^{-l})^{p'}.
 \end{split}
\end{equation}
At fixed $k,\xi$ only boundedly many $j$ occur, then at fixed $j,\xi$
only boundedly many $k$ occur. These estimates are uniform in $t,I,N$
and the selector. At $z=2/p$ all coefficients equal one, so scalar
analytic interpolation \cite{BL} gives the $1/p'$ power of this sum.
Taking the supremum over the same selectors proves the lemma.
\end{proof}

\subsection{Comparison with the ordinary curvature integral}\label{lc:sec:turning-examples}
We verify that the lower envelope preserves the simple curvature
criterion on H\"older-continuous boundaries. Suppose
$\partial\Omega\in C^{2,\gamma}$, $0<\gamma\le1$. For each pair $x,y$,
either $\kappa(y)\ge\kappa(x)/2$, or the H\"older estimate gives
\[
 d_{\partial\Omega}(x,y)
 \ge\left(\frac{\kappa(x)}{2[\kappa]_{C^\gamma}}\right)^{1/\gamma}.
\]
Taking the essential infimum in \Eqref{lc:eq:curvature-envelope} yields
\[
 \kappa_*(x)\gtrsim_{\Omega,\gamma}\kappa(x)^{1/\gamma}
 \quad\text{and}\quad
 [\log(e+\kappa_*^{-1})]^{1/p}
       \lesssim_{\Omega,p,\gamma}[\log(e+\kappa^{-1})]^{1/p}.
\]
Constant curvature is immediate. Thus the ordinary curvature integral
implies \Eqref{lc:eq:main-condition}, with arbitrary curvature
oscillations and any number of curvature zeros.

\subsection{Uniform sharpness of the logarithmic power}\label{lc:sec:power}
Fix $p>1$ and prescribe the actual smooth $2\pi$-periodic curvature
\[
 \kappa(s)=c_p e^{-|\sin s|^{-p}},\qquad
 c_p=\frac{2\pi}{\int_0^{2\pi}e^{-|\sin v|^{-p}}\,dv},
 \qquad \kappa=0\ \text{on }\pi\Z.
\]
All jets vanish at the two zeros. Define
\[
 \theta(s)=\int_0^s\kappa(v)\,dv,\qquad
 \Gamma_0(s)=\int_0^s(\cos\theta(v),\sin\theta(v))\,dv.
\]
The curvature is $\pi$-periodic with half-period integral $\pi$, so
$|\Gamma_0'|=1$, $\Gamma_0(s+\pi)=\Gamma_0(\pi)-\Gamma_0(s)$ and
$\Gamma_0(2\pi)=0$. Its angle strictly increases on every interval.
At each tangent, the inward-normal height first strictly increases and
then strictly decreases to zero only at the full period. Hence the curve
is embedded and strictly convex. Translate its interior symmetry center
to zero once, fixing
one smooth domain with true perimeter $2\pi$ and complete $\sigma=ds$.

The preceding comparison and $\kappa_*\le\kappa$ give
$\log(e+\kappa_*^{-1})\eqsim\log(e+\kappa^{-1})$.
For every $0<d<1/p$,
\[
 [\log(e+\kappa_*^{-1})]^d
       \eqsim\dist(s,\pi\Z)^{-pd}\quad\text{near the zeros},
 \qquad
 \int_0^{2\pi}[\log(e+\kappa_*^{-1})]^d\dd s<\infty.
\]
At $s=0$ the actual height above its fixed supporting tangent satisfies
\[
 h(s)=\int_0^s\sin\theta(v)\,dv
 \le\int_0^s(s-v)\kappa(v)\,dv
 \le(c_p/2)s^2e^{-s^{-p}}=:D(s).
\]
Here $|\sin v|\le v\le s$ for small $s>0$, and $D$ is increasing.
The cap in this one fixed normal $e_0$ contains the initial arc of length
$s$ at depth $D(s)$. Choose $s_*$ with $D(s_*)<\delta_*$. Thus
\[
 \mathcal C_p(\Omega)\ge
 \int_0^{D(s_*)}\Lambda_\Omega(e_0,\delta)^p\frac{d\delta}{\delta}
 \ge\int_0^{s_*}s^p\left(\frac2s+\frac p{s^{p+1}}\right)ds=\infty.
\]
\Eqref{eq:published-necessary} excludes strong $L^p$ boundedness of this
same complete original maximal operator. Hence $1/p$ cannot be uniformly
replaced by any smaller positive power in \Eqref{lc:eq:main-condition}.
\subsection{The lattice lower bound on one fixed curve}
We average overlapping periodic intervals over one scalar parameter.
This controls intersections from distinct physical scales in the lattice
test.
\begin{lemma}[Scalar averaging of periodic interval intersections]
\label{Bneg:lem:scalar-periodic-overlap}
Let $0<\rho<1/4$, let $I_\rho\subseteq\mathbb T$ be the interval
of radius $\rho$, and set
\[
 H_\rho(t)=|I_\rho\cap(I_\rho+t)|.
\]
Then $H_\rho$ is nonnegative, one-periodic, and
$\int_0^1H_\rho(t)\,dt=4\rho^2$. For every real $D\ne0$,
\begin{equation}\label{Bneg:eq:scalar-periodic-overlap-average}
 \left|\int_1^2H_\rho(DS)\,dS-4\rho^2\right|
       \le\frac{4\rho^2}{|D|}.
\end{equation}
For any finite $I\subseteq\mathbb Z$ with $k_0=\min I$,
\begin{equation}\label{Bneg:eq:dyadic-reciprocal-pair-sum}
 \sum_{\substack{k,l\in I\\k>l}}
       \frac1{2^k-2^l}\le4\,2^{-k_0}.
\end{equation}
\end{lemma}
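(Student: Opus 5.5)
The plan is to compute $H_\rho$ explicitly as a periodic autocorrelation, obtain \Eqref{Bneg:eq:scalar-periodic-overlap-average} by a period-counting argument, and prove \Eqref{Bneg:eq:dyadic-reciprocal-pair-sum} by a crude bound on each denominator. No earlier result of the paper is needed; everything is elementary.

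\emph{The function $H_\rho$.} Write
\[
 H_\rho(t)=\int_{\mathbb T}\one_{I_\rho}(x)\one_{I_\rho}(x-t)\dd x
 =(\one_{I_\rho}*\one_{-I_\rho})(t).
\]
This shows at once that $H_\rho$ is nonnegative and one-periodic. Fubini gives $\int_0^1H_\rho=|I_\rho|^2=(2\rho)^2=4\rho^2$. Because $\rho<1/4$, the support has length $4\rho<1$, so there is no wrap-around. Hence $H_\rho$ is the periodised tent $\max\{0,2\rho-|t|\}$ on $[-1/2,1/2]$, although only the period and the mean are used below.

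\emph{The scalar average.} Substitute $u=DS$. By the evenness of $H_\rho$ we may take $D>0$, so that
\[
 \int_1^2H_\rho(DS)\dd S=\frac1{|D|}\int_{J}H_\rho(u)\dd u,
\]
where $J$ is an interval of length $|D|$. Write $|D|=n+\theta$ with $n\in\Z_{\ge0}$ and $\theta\in[0,1)$. The interval $J$ splits into $n$ full periods, each contributing exactly $4\rho^2$, and a remainder interval of length $\theta$. Its contribution $r$ satisfies $0\le r\le4\rho^2$, since $H_\rho\ge0$ and the remainder lies inside one period. Therefore
\[
 \int_1^2H_\rho(DS)\dd S-4\rho^2=\frac{n4\rho^2+r}{|D|}-\frac{(n+\theta)4\rho^2}{|D|}=\frac{r-4\theta\rho^2}{|D|}.
\]
Both $r$ and $4\theta\rho^2$ lie in $[0,4\rho^2]$, so their difference has modulus at most $4\rho^2$. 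This gives \Eqref{Bneg:eq:scalar-periodic-overlap-average}, including the case $|D|<1$, where $n=0$. The one point requiring care is that the remainder contribution must be bounded by a full period, not by its length; nonnegativity of $H_\rho$ supplies exactly this.

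\emph{The dyadic pair sum.} For integers $k>l$ we have $2^k-2^l\ge2^{k-1}$. For fixed $k\in I$, the number of $l\in I$ with $l<k$ is at most $k-k_0$. Hence
\[
 \sum_{\substack{k,l\in I\\k>l}}\frac1{2^k-2^l}\le\sum_{k>k_0}(k-k_0)2^{1-k}=2\cdot2^{-k_0}\sum_{m\ge1}m2^{-m}=4\,2^{-k_0}.
\]
This proves \Eqref{Bneg:eq:dyadic-reciprocal-pair-sum}. None of the steps presents a genuine obstacle.
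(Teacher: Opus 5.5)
Your proposal is correct and essentially matches the paper's proof: the mean comes from the circle convolution, and the scalar average comes from splitting the substituted interval into complete periods plus a remainder whose integral lies in $[0,4\rho^2]$ by nonnegativity. The only difference is bookkeeping in the dyadic pair sum: you index by the larger exponent, using $2^k-2^l\ge2^{k-1}$ and at most $k-k_0$ partners, while the paper factors $2^k-2^l=2^l(2^d-1)$ and sums over $l\ge k_0$, $d\ge1$; both give exactly $4\,2^{-k_0}$.
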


\begin{proof}[Proof of \Lemref{Bneg:lem:scalar-periodic-overlap}]
The formula for the mean of $H_\rho$ is the integral of a
circle convolution: it is $|I_\rho|^2=(2\rho)^2$.
Write $\mu=4\rho^2$. For any interval of length $L>0$, split
it, starting at its own left endpoint, into $\lfloor L\rfloor$
complete periods and one interval of length less than one.
The latter has integral between zero and $\mu$, since
$H_\rho\ge0$. Thus the integral over the full interval differs
from $L\mu$ by at most $\mu$. The change of variable $t=DS$
proves \Eqref{Bneg:eq:scalar-periodic-overlap-average}, including
negative $D$.
For the other assertion, enlarge the index set to all integers
at least $k_0$ and set $k=l+d$:
\[
 \sum_{l\ge k_0}2^{-l}\sum_{d\ge1}\frac1{2^d-1}
 \le 2^{1-k_0}\sum_{d\ge1}2^{1-d}
 =4\,2^{-k_0}.
\]
The sums converge absolutely.
\end{proof}

We now combine the overlap estimate with rational tangent directions.
The resulting characteristic function tests all selected arc averages
simultaneously.
\begin{lemma}[Rational stripes with a varying lattice]
\label{Bneg:lem:rational-lattice-scale-lower}
Fix $1<p<2$ and $Q\ge1$. Let $A$ be an invertible real $2\times2$ matrix,
and let $v_i=(a_i,b_i)\in\mathbb Z^2$, $1\le i\le N$, be
pairwise nonparallel primitive vectors with
$Q\le a_i,b_i\le2Q$. Set
\[
 n_i=(-b_i,a_i),\qquad
 \tau_i=\frac{A^{-1}v_i}{|A^{-1}v_i|},\qquad
 \beta_i=n_i\cdot Az_i.
\]
Assume uniform constants $0<c<C$ satisfy
\[
 cQ\le|A^{-1}v_i|\le CQ\quad\text{and}\quad \|A\|\le H.
\]
Let one fixed positive compactly supported measure $\sigma$
contain true-arclength submeasures parametrized by
\[
 \begin{gathered}
 y_i(s)=z_i+s\tau_i+e_i(s),\qquad s\in[-\ell_i/2,\ell_i/2],\\
 m\le\ell_i\le Cm,\qquad |e_i(s)|\le d,\qquad d\nu_i\ge ds.
 \end{gathered}
\]
Let $I\subseteq\mathbb Z$ be finite, $J=|I|\ge1$,
$k_0=\min I$, and $K=\max I$. Suppose
\begin{equation}\label{Bneg:eq:lattice-scale-conditions}
 2^{k_0}m\ge C_1Q,\qquad
 2H2^Kd\le r/100,\qquad
 2^{k_0}\min_i|\beta_i|\ge1,\qquad
 r=\frac{c_1}{NJQ},
\end{equation}
where $C_1$ is sufficiently large and $c_1>0$ sufficiently
small, in terms only of the fixed constants above.
Then the finite maximal operator of this unchanged $\sigma$
satisfies the restricted weak lower bound
\begin{equation}\label{Bneg:eq:lattice-scale-weak-lower}
 \left\|\sup_{k\in I}|\sigma_{2^k}*\cdot|
       \right\|_{L^p\to L^{p,\infty}}^{\mathrm{restricted}}
 \ge c_p m(NJ)^{2/p-1}Q^{2/p-2}.
\end{equation}
More specifically, there exists $S\in[1,2]$ such that the
indicators witnessing the bound are bounded truncations of
\[
 f_S(x)=\mathbf1_{\{\operatorname{dist}(SAx,\mathbb Z^2)<r\}}.
\]
Different finite families
in the same curve may choose different $S$.
\end{lemma}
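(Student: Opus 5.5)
The plan is to test the finite maximal operator on $E=f_S\mathbf1_{B_R}$ for a large ball $B_R$. It is easiest to work in the torus coordinates $y=SAx\bmod\mathbb Z^2$. The linear change of variables $x\mapsto SAx$ multiplies all Lebesgue measures by the same factor, so density statements on $\mathbb T^2$ transfer to $\R^2$ up to boundary errors, which are negligible as $R\to\infty$. In these coordinates $E$ has density $\eqsim r^2$.

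\emph{Step 1: single-arc lower bound.} Fix $k\in I$ and $i$. For $x$ away from $\partial B_R$, use $\nu_i\le\sigma$ and the parametrization of the arc:
\[
 \sigma_{2^k}*f_S(x)\ge\int_{-\ell_i/2}^{\ell_i/2}
 f_S\bigl(x-2^kz_i-2^ks\tau_i-2^ke_i(s)\bigr)\dd s .
\]
Since $SA\tau_i=Sv_i/|A^{-1}v_i|$, the $SA$-image of the segment moves along the primitive rational direction $v_i$. The error term satisfies $|2^kSAe_i|\le 2H2^Kd\le r/100$, so it can be ignored after shrinking $r$ to $r/2$.

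Suppose the phase satisfies
\[
 n_i\cdot SAx-2^kS\beta_i\in\Z+(-\rho,\rho),\qquad \rho\eqsim rQ .
\]
Recall $n_i\cdot\Z^2=\Z$ and $|n_i|\eqsim Q$. Then the line carrying the segment passes within $r/4$ of lattice points, which are spaced $\eqsim Q\,2^{-k}$ apart in the parameter $s$. Each passage spends parameter time $\eqsim r2^{-k}$ inside the $r$-disc. The condition $2^{k_0}m\ge C_1Q$ guarantees $\gtrsim m2^k/Q$ full passages, so
\[
 M_{\sigma,I}f_S(x)\gtrsim \lambda:=mr/Q .
\]
Call $G_{i,k}(S)$ the set of $x$ satisfying this phase condition. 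It is a periodic stripe set of density exactly $2\rho$.

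\emph{Step 2: measure of the union, which I expect to be the main obstacle.} I will bound $|\bigcup G_{i,k}|$ from below by inclusion--exclusion:
\[
 \Big|\bigcup G_{i,k}\Big|\ge\sum|G_{i,k}|-\sum_{\text{pairs}}|G_{i,k}\cap G_{i',k'}| .
\]
For $i\ne i'$ the map $y\mapsto(n_i\cdot y,n_{i'}\cdot y)$ is a measure-preserving covering of $\mathbb T^2$, because $n_i,n_{i'}$ are nonparallel integer vectors. Hence the overlap density is exactly $(2\rho)^2$. These pairs contribute at most $(NJ)^2\cdot4\rho^2$.

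For $i=i'$ and $k>l$, the two stripes have the same direction and phases differing by $(2^k-2^l)S\beta_i$. Their overlap density is therefore $H_\rho\bigl((2^k-2^l)S\beta_i\bigr)$. Averaging over $S\in[1,2]$ and applying \Eqref{Bneg:eq:scalar-periodic-overlap-average} gives at most
\[
 4\rho^2\Bigl(1+\frac1{(2^k-2^l)|\beta_i|}\Bigr).
\]
Then \Eqref{Bneg:eq:dyadic-reciprocal-pair-sum} together with $2^{k_0}|\beta_i|\ge1$ bounds the sum of these same-direction overlaps over all pairs by $4N\rho^2(J^2+4)$.

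So the $S$-averaged total overlap is $\lesssim(NJ\rho)^2+N\rho^2$, while the main term is $2NJ\rho$. Since $NJ\rho\eqsim c_1$ is small, the overlap is at most half the main term on average. I then choose $S\in[1,2]$ at which the overlap is at most its average, which gives $|G|\gtrsim NJ\rho\eqsim NJrQ$ in density. This choice of $S$ is the only point where the family $I$ enters the construction of $f_S$, which explains the remark that different families may choose different $S$.

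\emph{Step 3: conclusion.} The restricted weak ratio is at least
\[
 \lambda\Bigl(\frac{|G|}{|E|}\Bigr)^{1/p}
 \gtrsim\frac{mr}{Q}\Bigl(\frac{NJrQ}{r^2}\Bigr)^{1/p}.
\]
Substituting $r=c_1/(NJQ)$ turns this into $c_pm(NJ)^{2/p-1}Q^{2/p-2}$, which is \Eqref{Bneg:eq:lattice-scale-weak-lower}.

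Letting $R\to\infty$ removes the truncation errors. These come only from points within $O(2^K\operatorname{diam}\operatorname{supp}\sigma)$ of $\partial B_R$, a set of relative measure $o(1)$.
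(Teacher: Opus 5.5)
Your proposal is correct and follows the paper's proof: the same test function $f_S$, the same single-arc bound $\gtrsim mr/Q$ on stripes of width $\rho\eqsim rQ$, and the same $S$-averaging of same-direction overlaps. That averaging uses the scalar overlap lemma, the dyadic reciprocal sum and $2^{k_0}|\beta_i|\ge1$, together with exact independence of distinct integer characters. The only difference is bookkeeping: the paper applies Bonferroni within each direction to $U_i(S)\subseteq\mathbb T$, picks $S$ with $\sum_iu_i(S)\ge NJ\rho$, and then bounds the union over directions by Cauchy--Schwarz, whereas you run one Bonferroni over all $NJ$ stripes and pick $S$ with overlap at most its mean, which suffices because $NJ\rho\eqsim c_1$ is small (your aside that $S$ is the only place $I$ enters is slightly off, since $r$ also depends on $J$).
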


\begin{proof}[Proof of \Lemref{Bneg:lem:rational-lattice-scale-lower}]
Write $A_S=SA$ for $1\le S\le2$. Its inverse sends $v_i$
to the same tangent direction $\tau_i$, with physical period
$|A_S^{-1}v_i|=|A^{-1}v_i|/S\eqsim Q$ on the transformed
torus. The transformed speed along the unit tangent is
\[
 |A_S\tau_i|=\frac{S|v_i|}{|A^{-1}v_i|}\eqsim1,
\]
uniformly for all $S\in[1,2]$. Primitivity implies that if
\begin{equation}\label{Bneg:eq:lattice-scale-stripe}
 \operatorname{dist}\bigl(n_i\cdot A_Sx
       -2^kS\beta_i,\mathbb Z\bigr)<\rho,
 \qquad \rho=c_3Qr,
\end{equation}
with $c_3$ sufficiently small, the corresponding infinite
straight line passes within transformed distance $r/10$ of
an integer point during each complete period. It therefore
spends physical length at least $c r$ inside the transformed
radius-$r/2$ balls during each period. The line segment of
length $2^k\ell_i\ge C_1Q$ contains a fixed positive fraction
of complete periods. The true arc differs from that segment
in transformed coordinates by at most
$2H2^Kd\le r/100$. Positivity and $d\nu_i\ge ds$ consequently
give
\begin{equation}\label{Bneg:eq:lattice-scale-true-average}
 (\nu_i)_{2^k}*f_S(x)\ge c\frac{mr}{Q}
\end{equation}
on \Eqref{Bneg:eq:lattice-scale-stripe}. This is an estimate for
the original curved arclength, at the original physical scale.

For $S\in[1,2]$, define
\[
 U_i(S)=\bigcup_{k\in I}(2^kS\beta_i+I_\rho)\subseteq\mathbb T,
 \qquad u_i(S)=|U_i(S)|.
\]
By \Lemref{Bneg:lem:scalar-periodic-overlap}, Bonferroni, and
\Eqref{Bneg:eq:lattice-scale-conditions},
\begin{align*}
 \int_1^2u_i(S)\,dS
 &\ge2J\rho-4\rho^2\binom J2
       -\frac{4\rho^2}{|\beta_i|}
           \sum_{k>l}\frac1{2^k-2^l}\\
 &\ge2J\rho-2J(J-1)\rho^2
       -\frac{16\rho^2}{|\beta_i|2^{k_0}}
 \ge J\rho,
 \qquad J\rho=\frac{c_3c_1}{N}.
\end{align*}
Choose $c_1$ sufficiently small. Since $u_i(S)\le2J\rho$, at least one
$S\in[1,2]$ satisfies
\[
 c_3c_1=NJ\rho\le\sum_i u_i(S)\le2NJ\rho=2c_3c_1.
\]

Now work only in transformed coordinates $u\in\mathbb T^2$,
with that fixed $S$. Define
\[
 B_i=\{u:n_i\cdot u\in U_i(S)\}.
\]
The integer normals are nonparallel, so the torus homomorphism
$u\mapsto(n_i\cdot u,n_j\cdot u)$ is surjective for $i\ne j$.
It pushes Haar measure to Haar measure. Consequently
\[
 |B_i|=u_i(S)\quad\text{and}\quad |B_i\cap B_j|=u_i(S)u_j(S).
\]
Cauchy--Schwarz therefore gives
\[
 \begin{aligned}
 \left|\bigcup_iB_i\right|
 &\ge\frac{\left(\sum_i|B_i|\right)^2}
          {\sum_i|B_i|+2\sum_{i<j}|B_i\cap B_j|}\\
 &\ge\frac{\left(\sum_i u_i(S)\right)^2}
          {\sum_i u_i(S)+\left(\sum_i u_i(S)\right)^2}
 \ge c_4>0.
 \end{aligned}
\]
Only the integer normals $n_i$ enter this torus calculation. The matrices
$A$ and $A_S$ may have arbitrary real entries.

The periodic characteristic function $f_S$ has density $\pi r^2$ on the physical
lattice $A_S^{-1}\mathbb Z^2$. On a periodic set of
density at least $c_4$, one actual average in
\Eqref{Bneg:eq:lattice-scale-true-average} applies, and hence the
full positive $\sigma$ average at that physical index is no
smaller. Thus the original common physical supremum is at
least $c mr/Q$ there.

To pass to bounded indicators on $\mathbb R^2$, use
$f_{S,R}=f_S\mathbf1_{[-R,R]^2}$. Since $I$ is finite and
$\sigma$ is compactly supported, all original averages agree
with the periodic ones on $[-R+D_I,R-D_I]^2$, for some finite
$D_I$ independent of $R$. For any fixed invertible real
$A_S$, tile space by its bounded fundamental parallelograms.
Only $O(R)$ cells meet the boundary of the expanding square,
whereas its area is of order $R^2$. Normalized integrals of
bounded lattice-periodic functions therefore converge to
their fundamental-cell means. This applies both to the characteristic
function and to the union of the sets. The common cell Jacobian
$|\det A_S|^{-1}$ cancels. At level $\lambda=c mr/(2Q)$,
letting $R\to\infty$ gives the restricted weak ratio
\[
 c_p\frac{mr}{Q}(\pi r^2)^{-1/p}
 =c_p m(NJ)^{2/p-1}Q^{2/p-2}.
\]
The curve, its measure, and the physical supremum are unchanged
throughout. Only the test function and its lattice vary.
\end{proof}

The fixed interior origin keeps every supporting-line phase away from
zero. This supplies the phase hypothesis of the preceding lattice lemma
without moving the curve.
\begin{lemma}[Nonzero normal phase from an interior origin]
\label{Bneg:cor:interior-origin-normal-phase}
Suppose $\sigma$ is true arclength on the boundary of a fixed
convex domain containing the origin in its interior, with
support distances bounded below by $h_0>0$. Let $z_i$ be the
actual curve point of the $i$th arc, and assume that $\tau_i$
is its actual tangent at $z_i$. Then
\[
 |\beta_i|=|(A^Tn_i)\cdot z_i|
       \ge h_0|A^Tn_i|.
\]
For the matrices used in the rational constructions,
\[
 A=\begin{pmatrix}1&0\\-P&M\end{pmatrix},\qquad
 \theta_i=\frac{P+b_i/a_i}{M},\qquad M>0,
\]
one has
\[
 A^Tn_i=(-b_i-Pa_i,Ma_i),\qquad
 \beta_i=a_iM(y_i-\theta_i x_i),\qquad
 |\beta_i|\ge h_0MQ.
\]
Thus $2^{k_0}\min_i|\beta_i|\ge1$ automatically for the
large families and positive starting indices used below. The diagonal case is $P=0$.
\end{lemma}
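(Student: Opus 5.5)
The plan is to identify $A^Tn_i$ as a normal vector to the curve at $z_i$, and then use the fact that the tangent line at a boundary point of a convex domain is a supporting line, whose distance from an interior origin is at least $h_0$.

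\emph{Step 1: $A^Tn_i$ is normal at $z_i$.} Write $\beta_i=n_i\cdot Az_i=(A^Tn_i)\cdot z_i$. Since $n_i=(-b_i,a_i)$ is perpendicular to $v_i=(a_i,b_i)$,
\[
 (A^Tn_i)\cdot A^{-1}v_i=n_i\cdot v_i=0 .
\]
So $A^Tn_i$ is perpendicular to $A^{-1}v_i$. By hypothesis, $A^{-1}v_i/|A^{-1}v_i|=\tau_i$ is the actual tangent at $z_i$, so $\nu_i=A^Tn_i/|A^Tn_i|$ is a unit normal to $\partial\Omega$ at $z_i$. This vector is nonzero because $A$ is invertible and $n_i\ne0$.

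\emph{Step 2: the supporting-line bound.} The hypothesis gives a genuine tangent at $z_i$, so the boundary is differentiable there. Hence the unique supporting line of the convex domain through $z_i$ is the tangent line $\{x:\nu_i\cdot x=\nu_i\cdot z_i\}$. After possibly replacing $\nu_i$ by $-\nu_i$, this makes $\nu_i$ the outward normal. Then
\[
 \nu_i\cdot z_i=\sup_{x\in\overline\Omega}\nu_i\cdot x ,
\]
which is the support distance in direction $\nu_i$, and by assumption this is at least $h_0$. Therefore $|\nu_i\cdot z_i|\ge h_0$, and multiplying by $|A^Tn_i|$ gives $|\beta_i|\ge h_0|A^Tn_i|$. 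This step needs the tangent at $z_i$ to be the actual curve tangent; that is the only place the geometric hypothesis enters.

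\emph{Step 3: the explicit matrices.} For $A=\begin{pmatrix}1&0\\-P&M\end{pmatrix}$ we have $A^T=\begin{pmatrix}1&-P\\0&M\end{pmatrix}$, so $A^Tn_i=(-b_i-Pa_i,\,Ma_i)$. Writing $z_i=(x_i,y_i)$,
\[
 \beta_i=-(b_i+Pa_i)x_i+Ma_iy_i=a_iM\bigl(y_i-\theta_ix_i\bigr),
 \qquad \theta_i=\frac{P+b_i/a_i}{M}.
\]
Since $|A^Tn_i|\ge M a_i\ge MQ$, we get $|\beta_i|\ge h_0MQ$. Once $2^{k_0}h_0MQ\ge1$, which holds for the large families and nonnegative starting indices used, this gives $2^{k_0}\min_i|\beta_i|\ge1$. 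The diagonal case is $P=0$. No step is a real obstacle.
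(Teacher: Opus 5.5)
Your proof is correct and follows the paper's argument: $A^Tn_i\perp\tau_i$ because $n_i\cdot v_i=0$, its unit normalization (up to sign) is the supporting normal at $z_i$, the positive support distance of the interior origin gives $|\beta_i|\ge h_0|A^Tn_i|$, and the explicit computation of $A^Tn_i$ gives $|\beta_i|\ge h_0MQ$. You fill in details the paper leaves implicit, notably why a genuine tangent at $z_i$ forces the supporting line to be the tangent line, and these additions are sound.
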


\begin{proof}[Proof of \Lemref{Bneg:cor:interior-origin-normal-phase}]
The vector $A^Tn_i$ is perpendicular to $\tau_i$, since
$n_i\cdot v_i=0$. Its positive or negative unit normalization
is the supporting normal at $z_i$. The origin's positive
support distance gives the first assertion. The displayed
matrix calculation gives the rest. Tangent orthogonal
projection as the arc parameter ensures the required
$d\nu_i\ge ds$ in the preceding lemma.
\end{proof}

\subsection{A smooth convex graph with uniform cap energy}\label{Bneg:subsec:negative-geometry}
\subsubsection{A smooth convex graph with mass-preserving rational blocks}
\label{Bneg:subsub:rational-blocks}

Fix $1<p<(\sqrt{13}-1)/2$ and choose $0<a<p$.
All constants below are fixed before the block index tends to infinity.
Let
\[
 K_0(t)=e^{-|t|^{-a}}\quad(t\ne0),\qquad K_0(0)=0,
 \qquad \gamma_0(t)=\int_0^t(t-u)K_0(u)\,du
\]
with the usual oriented integral for negative $t$.
Choose a small fixed graph interval $(-2t_*,2t_*)$ so that
$|\gamma_0'|<1/100$ there.
Set $t_n=2^{-n}$ and $b_n=c\,t_n^{a+1}$ with a small fixed $c>0$.
Write
\[
 B_n=(l_n,r_n)=(t_n-b_n,t_n+b_n),\qquad
 D_n=\int_{B_n}K_0(t)\,dt.
\]
Start the construction at a sufficiently large index that all
blocks lie in $(0,t_*/4)$. For all such $n$, these blocks are disjoint,
$D_n\eqsim b_nK_0(t_n)$, and $K_0(t)\eqsim K_0(t_n)$ on $B_n$.
All comparability constants are independent of $n$.

\smallskip\noindent\textbf{Rational directions and their exact target slopes.}\space
Choose
\[
 Q_n=\left\lfloor\exp\left(\frac{t_n^{-a}}{4n^2}\right)\right\rfloor,
 \qquad
 \mathcal V_n=\{(u,v)\in\mathbb Z^2:Q_n\le u,v\le2Q_n,\ (u,v)=1\}.
\]
Let $N_n=|\mathcal V_n|$ and list these vectors in increasing order
of their ratio $v/u$. Then
\[
 c_1Q_n^2\le N_n\le C_1Q_n^2,\qquad
 \left|\frac vu-\frac{v'}{u'}\right|\ge\frac1{4Q_n^2}
 \quad\hbox{for distinct vectors}.
\]
For completeness, the lower counting bound follows by excluding
pairs divisible by $d\ge2$:
their number is at most
$Q_n^2\sum_{d\ge2}d^{-2}+O(Q_n\log Q_n)$,
strictly less than a fixed fraction of $Q_n^2$ for large $Q_n$.
Distinct positive primitive vectors cannot be parallel.

Define
\[
 M_n=\lceil20/D_n\rceil\quad\text{and}\quad \theta_n^-=\gamma_0'(l_n).
\]
Choose an integer $P_n$ such that every target
\[
 s_{n,i}=\frac{P_n+v_i/u_i}{M_n},\qquad (u_i,v_i)\in\mathcal V_n,
\]
lies in
$[\theta_n^-+D_n/4,\theta_n^-+3D_n/4]$.
Indeed the permissible interval for $P_n$ has length
$M_nD_n/2-3/2>1$.
For a fixed $c_2>0$, independent of $n$,
\begin{equation}\label{Bneg:eq:rational-target-increments}
 s_{n,i+1}-s_{n,i}\ge c_2D_n/N_n.
\end{equation}
The small graph slope gives $0<s_{n,i}<1/50$ after shortening
the chart. Also $M_nD_n\eqsim1$ and
\begin{equation}\label{Bneg:eq:matrix-growth}
 \log M_n=O(t_n^{-a}+\log(1/t_n)).
\end{equation}
The integer matrix
\[
 A_n=\begin{pmatrix}1&0\\-P_n&M_n\end{pmatrix}
\]
is invertible, and
\[
 A_n^{-1}(u_i,v_i)=(u_i,s_{n,i}u_i),\qquad
 |A_n^{-1}(u_i,v_i)|\eqsim Q_n,\qquad
 \|A_n\|\lesssim M_n.
\]
For the last inequality use $P_n/M_n\le1/50$.

\smallskip\noindent\textbf{Hole packing parameters.}\space
Let
\[
 d_n=\frac{2b_n}{N_n+1}\quad\text{and}\quad
 x_{n,i}=l_n+i d_n,\qquad
 m_n=\beta d_n,\qquad
 \varepsilon_n=e^{-m_n^{-p}},
\]
where the fixed $\beta>0$ will be chosen very small below.
The holes are
\[
 H_{n,i}=(x_{n,i}-m_n,x_{n,i}+m_n).
\]
They have equal parameter radius within a block but the radii
decrease supergeometrically between blocks:
$m_{n+1}/m_n\to0$.
In particular, after increasing the initial index,
\begin{equation}\label{Bneg:eq:small-radius-tail}
 \sum_{n\ge n_1}m_n\le2m_{n_1}
 \quad\hbox{for every }n_1.
\end{equation}
Also
\[
 \log N_n=o(t_n^{-a}),\qquad
 \varepsilon_n\ll K_0(t_n).
\]

\smallskip\noindent\textbf{Curvature inside each block.}\space
We construct a positive $C^\infty$ function $K_n$ on $B_n$.
Fix a small $c_0>0$, depending only on the constants in
\Eqref{Bneg:eq:rational-target-increments} and the comparison of
$D_n$ with $b_nK_0(t_n)$.
First take a smooth background $q_n^0$ which is $K_0$ on
endpoint collars of width $d_n/10$, is $c_0K_0$ away from
collars of width $d_n/5$, and lies between these values.
Thus $q_n^0=c_0K_0$ throughout every center-to-center interval.
Use a fixed bump $\psi$ with $0\le\psi\le1$, supported in
$(-1,1)$ and equal to one on $[-1/2,1/2]$, to set
\[
 q_n(t)=q_n^0(t)+\sum_i
 \psi((t-x_{n,i})/m_n)\,[\varepsilon_n-q_n^0(t)].
\]
For large $n$, $q_n\ge\varepsilon_n$ on the whole block,
it equals $\varepsilon_n$ on each central half-hole,
and outside the holes it is at least $c_0K_0$.

Let the partition nodes be $l_n,x_{n,1},\ldots,x_{n,N_n},r_n$,
and prescribe derivative values at them to be
\[
 \theta_n^-,\quad s_{n,1},\ldots,s_{n,N_n},\quad
                  \theta_n^-+D_n.
\]
On each consecutive node interval add a nonnegative smooth bump,
supported in its middle part and away from holes and endpoint
collars, whose integral equals the prescribed derivative increment
minus the integral of $q_n$ over that interval.
These residuals are positive: on center-to-center intervals,
$\int q_n\le Cc_0D_n/N_n$, whereas the target increment is at
least $c_2D_n/N_n$. Choose $c_0$ sufficiently small.
On the two endpoint intervals, $\int q_n=O(D_n/N_n)$, while
the target increments are at least $D_n/4$.
Thus these residuals are positive for large $n$ as well.
The gap between holes has a fixed positive fraction of $d_n$,
so all bumps can be supported as asserted.

Call the resulting function $K_n$. It equals $K_0$ on fixed
open neighborhoods of each endpoint and satisfies
\begin{equation}\label{Bneg:eq:mass-preserving-curvature}
 \int_{B_n}K_n=D_n\quad\text{and}\quad
 \int_{l_n}^{x_{n,i}}K_n=s_{n,i}-\theta_n^-.
\end{equation}
It remains $\varepsilon_n$ on every central half-hole and
satisfies $K_n\ge c_0K_0$ outside the holes. Bumps may have
large ratios to the baseline, but for every derivative order $r$
\begin{equation}\label{Bneg:eq:curvature-derivative}
 \|K_n^{(r)}\|_\infty
       \le C_rK_0(t_n)N_n^{r+1}b_n^{-r}.
\end{equation}
Indeed a bump has integral at most $D_n$, support scale
comparable to $d_n$, and hence derivative size
$O_r(D_nd_n^{-r-1})$.
Background and hole-cutoff derivatives obey a smaller bound.
The hole and gap supports have uniformly bounded pointwise
overlap, so the number of bumps does not add an extra factor.

\smallskip\noindent\textbf{One fixed smooth graph and its actual arcs.}\space
Define $K=K_n$ on $B_n$ and $K=K_0$ elsewhere.
Endpoint matching gives smoothness away from zero.
By \Eqref{Bneg:eq:curvature-derivative} and $\log N_n=o(t_n^{-a})$,
for every fixed $r,L$,
$\|K_n^{(r)}\|_\infty=o(t_n^L)$.
Thus $K$ extends $C^\infty$ to zero with every derivative zero,
and is positive elsewhere. Let $\gamma''=K$,
$\gamma(0)=\gamma'(0)=0$.
The integral identity \Eqref{Bneg:eq:mass-preserving-curvature} implies
\[
 \gamma'(t)=\gamma_0'(t)\quad(t\notin\bigcup_n B_n),\qquad
 \gamma'(x_{n,i})=s_{n,i}.
\]
These identities preserve curvature mass and slopes. They do not preserve the height first moment, and no equality of $\gamma$ with $\gamma_0$ is used. There is no recursive unknown incoming slope. The graph is
$C^\infty$, strictly convex, and has only one curvature-zero
point. Here $K=\gamma''$ is graph second derivative, whereas
geometric curvature is $K/(1+\gamma'^2)^{3/2}$. The two are
uniformly comparable on the reduced chart. Its slope stays
uniformly small because the integral of graph second derivative
on every block is unchanged.

On the central half-hole the graph is exactly quadratic:
\[
 \gamma(x_{n,i}+h)=\gamma(x_{n,i})+s_{n,i}h
                         +\tfrac12\varepsilon_nh^2
 \quad(|h|\le m_n/2).
\]
For $s_{n,i}=\gamma'(x_{n,i})$ the orthogonal tangent coordinate is
\[
 s=\sqrt{1+s_{n,i}^2}\,h+
       \frac{s_{n,i}\varepsilon_n}{2\sqrt{1+s_{n,i}^2}}h^2,
\]
and its derivative with respect to $h$ is bounded above and below.
In orthogonal tangent--normal coordinates the normal defect is
$\varepsilon_n h^2/(2\sqrt{1+s_{n,i}^2})$.
A symmetric subarc in $s$ therefore has parameter length
comparable to $m_n$, true arclength
$\sqrt{1+|\partial_s e|^2}\,ds\ge ds$, and exact tangent
$\tau_{n,i}=A_n^{-1}v_i/|A_n^{-1}v_i|$, and normal defect
bounded by $C\varepsilon_nm_n^2$.
Thus these are actual arcs suitable for the rational-periodic
lemma, applied to the actual curved subarcs.

\subsubsection{All supporting normals: uniform cap energy}\label{Bneg:subsec:cap-calculation}
Fix a large constant $A_*>0$ and then choose
$\beta<1/[100(A_*+1)]$.
The expanded holes
\[
 H_{n,i}^*=(x_{n,i}-(A_*+1)m_n,\,
                   x_{n,i}+(A_*+1)m_n)
\]
are pairwise disjoint, within and between blocks.
On every such expanded hole, $K\ge\varepsilon_n$:
outside its actual hole the curvature is at least $c_0K_0$,
which exceeds $\varepsilon_n$ for large $n$, and all added bumps
are nonnegative.

For a parameter interval $J$ of length $L$, the hole lengths
obey the packing estimate
\begin{equation}\label{Bneg:eq:hole-packing}
 \sum_{i:H_{n,i}\cap J\ne\varnothing}|H_{n,i}|
       \le 2\beta|J\cap B_n|+C m_n.
\end{equation}
This follows by counting equally spaced centers. At most a
fixed number of boundary holes are added. If every hole
intersecting $J$ has radius below $L/A_*$, sum
\Eqref{Bneg:eq:hole-packing} only over blocks containing at least
one hole meeting $J$, use their disjointness and
\Eqref{Bneg:eq:small-radius-tail}, and obtain
\[
 |\text{all holes meeting }J|
       \le 2\beta L+C'\,L/A_*<L/8
\]
by choosing $A_*$ large and $\beta$ small in that order.
Here the sum of boundary radii is controlled by the largest
radius among the intersecting blocks, irrespective of their
large numbers of holes.

For any supporting parameter $x$, side $e=\pm1$, and admissible
parameter distance $L$, set
\[
 D_{x,e}(L)=\gamma(x+eL)-\gamma(x)-eL\gamma'(x)
          =\int_0^L(L-v)K(x+ev)\,dv.
\]
If the segment meets a hole with $m_n\ge L/A_*$, it lies in its
expanded hole. Hence
\[
 D_{x,e}(L)\ge\tfrac12\varepsilon_nL^2.
\]
Moreover $x$ lies in that expanded hole. Their disjointness
ensures that for this fixed supporting parameter there is at
most one exceptional pair $(n,i)$, even as $L$ and $e$ vary.
Otherwise discard the holes, of total length below $L/8$.
In $0\le v\le3L/4$, also discard $|x+ev|<L/8$, which removes
length at most $L/4$. A set of length at least $3L/8$ remains,
where $L-v\ge L/4$ and
$K(x+ev)\ge c_0e^{-8^aL^{-a}}$. Consequently
\[
 D_{x,e}(L)\ge c L^2e^{-8^aL^{-a}}.
\]
Thus, uniformly in $x,e,L$,
\[
 \log^+\frac{C}{D_{x,e}(L)}
 \le C_1+C_2\log^+(1/L)+C_3L^{-a}
       +\mathbf1_{\{L\le A_*m_n\}}m_n^{-p},
\]
with the last term present for at most one exceptional hole
depending on $x$.

If $L_{x,e}(\delta)$ is the one-sided vertical-cap parameter
length, Tonelli gives
\[
 \int_0^{\delta_0}L_{x,e}(\delta)^p\frac{d\delta}{\delta}
 =p\int_0^{L_{\max}}L^{p-1}
                \left[\log\frac{\delta_0}{D_{x,e}(L)}\right]_+dL.
\]
Since $a<p$,
\begin{align*}
 \int_0^{L_{\max}}L^{p-1}
    \bigl[1+\log^+(1/L)+L^{-a}\bigr]\,\dd L&<\infty,\\
 m_n^{-p}\int_0^{A_*m_n}pL^{p-1}\,dL&=A_*^p.
\end{align*}
For each fixed contact there is at most one exceptional hole. Bounded
graph slope compares actual arclength with parameter length and normal
depth with vertical depth. Therefore, summing both sides at that contact,
\[
 \sup_x\int_0^{\delta_0}
    \bigl[L_{x,+1}(\delta)+L_{x,-1}(\delta)\bigr]^p
          \frac{d\delta}{\delta}<\infty.
\]
The supremum is taken after the full depth integral.

\subsubsection{Completion, fixed origin, and fixed rescaling}
The graph endpoints lie in compact positive-curvature regions.
Preserve their germs, parametrize short extensions by tangent
angle, and prescribe positive smooth radius of curvature on
the complementary angle interval, of length greater than $\pi$.
Choose three nonnegative smooth bumps with tangent-direction
integrals whose convex hull contains zero in its interior.
Positive linear combinations of these integrals span $\R^2$.
Adding suitable nonnegative multiples to the radius therefore
cancels the total displacement and closes the curve. Positive
radius on the added part and monotone turning give a simple
strictly convex oval, smoothly matching the original graph.

Use the original graph on $[-t_*,t_*]$ for the completion.
The central interval $[-t_*/2,t_*/2]$ contains all blocks.
By strict convexity and compactness, sufficiently small caps
centered in it remain in this larger graph interval.
Small caps centered outside the central interval avoid
$[-t_*/4,t_*/4]$, which contains all holes, and lie in a fixed compact
positive-curvature region. Such caps have uniformly finite
depth integrals. Intermediate depths have bounded logarithmic
length. Hence the complete oval has finite original uniform
$p$ cap functional.

Finally translate an interior point to zero and apply one
fixed dyadic isotropic enlargement large enough to contain
$B(0,10)$. This fixes one given domain and one interior origin.
Cap finiteness is preserved: if the enlargement factor is $L$,
then $\Lambda_{L\Omega}(\theta,\delta)
=L\Lambda_\Omega(\theta,\delta/L)$, and translation does not
alter cap masses. The rational tangent slopes and integer
matrices are unchanged by isotropic enlargement.
Arc masses and defects acquire fixed factors only, so the
rational-periodic lemma keeps $J\eqsim m_n^{-p}$ and the same
unbounded-power lower bound. The logarithmic matrix condition
follows from \Eqref{Bneg:eq:matrix-growth} and
$m_n^{-p}\gg t_n^{-a}$.
This completes the geometric construction and the cap estimate for the complete oval.

\subsection{The above-two graph and its directional cap integral}
\label{Bneg:sec:geometry}
Throughout the above-two construction fix
\[
 p_*>3,\qquad s_0=\frac{p_*+3}{2},\qquad a=\tfrac12.
\]
The letter $K$ in this construction denotes the actual graph second derivative.

\subsubsection{Rational tangents and positive movable connectors}
Repeat the geometric setup of \Secref{Bneg:subsub:rational-blocks}
with $a=1/2$. Use its baseline $K_0$, double integral $\gamma_0$,
blocks $B_n=(l_n,r_n)$, masses $D_n$, primitive vectors $v_{n,i}=(u_i,v_i)$,
and parameters $t_n,b_n,Q_n,N_n,M_n,P_n$. This is a separate graph,
and all symbols now refer to this choice of $a$. The setup gives
$D_n\eqsim b_ne^{-t_n^{-a}}$, $N_n\eqsim Q_n^2$ and
\begin{equation}\label{Bneg:eq:C-slopes}
 s_{n,i}=\frac{P_n+v_i/u_i}{M_n}
 \in[\gamma_0'(l_n)+D_n/4,\gamma_0'(l_n)+3D_n/4].
\end{equation}
In particular, $s_{n,i+1}-s_{n,i}\ge c_1D_n/N_n$ and
$0<s_{n,i}<1/50$. With $\theta_n^-=\gamma_0'(l_n)$,
\begin{equation}\label{Bneg:eq:C-matrix}
 \begin{gathered}
 A_n=\begin{pmatrix}1&0\\-P_n&M_n\end{pmatrix},\qquad
 A_n^{-1}v_{n,i}=(u_i,s_{n,i}u_i),\\
 |A_n^{-1}v_{n,i}|\eqsim Q_n,\quad
 \|A_n\|\lesssim M_n,\quad \log M_n=O(t_n^{-a}+n).
 \end{gathered}
\end{equation}
These are the geometric verifications from the previous construction.
Its restriction on $p$ concerns only the earlier lower bound and is
not used here. The new curvature floor is specified next.

Let
\begin{equation}\label{Bneg:eq:C-holes}
 \begin{split}
 d_n&=\frac{2b_n}{N_n+1},\quad x_{n,i}=l_n+i d_n,\quad m_n=\beta d_n,\\
 F_n&=m_n^{-s_0},\quad \varepsilon_n=e^{-F_n},\quad
 H_{n,i}=(x_{n,i}-m_n,x_{n,i}+m_n).
 \end{split}
\end{equation}
The constant $\beta>0$ is fixed sufficiently small below. The radii
satisfy $m_{n+1}/m_n\to0$. After increasing $n_0$,
\begin{equation}\label{Bneg:eq:C-radius-tail}
 \sum_{n\ge n_1}m_n\le2m_{n_1},\qquad n_1\ge n_0.
\end{equation}
Also $\log N_n=o(t_n^{-a})$ and $\varepsilon_n\ll K_0(t_n)$.

Use the smooth background and hole cutoffs from
\Secref{Bneg:subsub:rational-blocks}, with the new
$\varepsilon_n$ in \Eqref{Bneg:eq:C-holes}. Thus $q_n$ equals
$K_0$ near block endpoints, $\varepsilon_n$ on central half-holes,
and is at least $c_0K_0$ outside the holes. Prescribe the same
node slopes $\theta_n^-,s_{n,1},\ldots,s_{n,N_n},\theta_n^-+D_n$.
The residual mass on a node interval is its required slope increment
minus $\int q_n$. The slope separation and sufficiently small $c_0$
give, exactly as before,
\begin{equation}\label{Bneg:eq:C-residual}
 c_2D_n/N_n\le w_{n,i}\le D_n.
\end{equation}
This includes both endpoint intervals, whose slope increments are
at least $D_n/4$.

Choose once an even nonnegative smooth bump $\varphi$ supported
in $(-1,1)$, of integral one, and a small fixed $\eta>0$.
On each node interval use a connector of the form
\begin{equation}\label{Bneg:eq:C-movable-bump}
 \frac{w_{n,i}}{\eta d_n}
       \varphi\left(\frac{t-c_{n,i}}{\eta d_n}\right).
\end{equation}
Its centre $c_{n,i}$ may vary in a fixed closed middle interval
of length $c_3d_n>0$. Choose the allowed intervals and $\eta$
so that all supports stay a fixed fraction of $d_n$ away from
the holes and endpoint collars. This is possible for small
$\beta$, uniformly on all node intervals. Every such connector
has the prescribed mass $w_{n,i}$ and first moment $w_{n,i}c_{n,i}$.
It is the first moment of this connector which will be varied.

Let $K_n$ be the background plus these connectors. Every allowed
choice satisfies
\begin{equation}\label{Bneg:eq:C-masses}
 \int_{B_n}K_n=D_n\quad\text{and}\quad
 \int_{l_n}^{x_{n,i}}K_n=s_{n,i}-\theta_n^-.
\end{equation}
It agrees with $K_0$ on endpoint neighbourhoods, is exactly
$\varepsilon_n$ on every central half-hole, and obeys
$K_n\ge c_0K_0$ outside the holes. Uniformly in all centres,
\begin{equation}\label{Bneg:eq:C-jets}
 \|K_n^{(r)}\|_\infty
 \le C_rK_0(t_n)N_n^{r+1}b_n^{-r}
 =o(t_n^L)\quad\text{for every fixed }r,L.
\end{equation}
The derivative calculation in \Eqref{Bneg:eq:curvature-derivative}
is uniform in translations of each connector, and
$\log N_n=o(t_n^{-a})$ gives the last equality.

Define $K=K_n$ on the blocks and $K=K_0$ elsewhere in the graph chart.
It is $C^\infty$, all of its jets vanish at zero, and $K(t)>0$
for every $t\ne0$. Set $\gamma''=K$ and
$\gamma(0)=\gamma'(0)=0$. By the exact mass identities,
\begin{equation}\label{Bneg:eq:C-exact-tangents}
 \gamma'(t)=\gamma_0'(t)\quad(t\notin\bigcup B_n),\qquad
 \gamma'(x_{n,i})=s_{n,i}.
\end{equation}
Thus the graph slope stays uniformly small. Its actual geometric
curvature is $K/(1+\gamma'^2)^{3/2}$, so it has exactly one zero.

Each half-hole has the same exact quadratic graph and tangent-coordinate
calculation as in \Secref{Bneg:subsub:rational-blocks}, now with
\Eqref{Bneg:eq:C-holes}. Hence, after the fixed translation by $-o$,
a symmetric tangent-coordinate subarc of length $\mu_n\eqsim m_n$
satisfies
\begin{equation}\label{Bneg:eq:C-actual-arcs}
 \begin{split}
 y_{n,i}(t)&=z_{n,i}+t\tau_{n,i}+e_{n,i}(t),\qquad
 \dd\nu_{n,i}\ge\dd t,\\
 \tau_{n,i}&=A_n^{-1}v_{n,i}/|A_n^{-1}v_{n,i}|,\quad
 |e_{n,i}(t)|\le C\varepsilon_nm_n^2.
 \end{split}
\end{equation}
Here $e_{n,i}$ is in the actual normal direction and $\nu_{n,i}$
is the true-arclength restriction. These subarcs are disjoint, so
the eventual complete arclength satisfies
$\sigma\ge\sum_i\nu_{n,i}$ for every block.

\subsubsection{A positive completion and one interior origin fixed in advance}
\label{Bneg:subsec:C-completion}
Apply the germ-preserving completion in
\Secref{Bneg:subsec:negative-geometry} to the baseline, before its
translation or rescaling, to obtain a smooth strictly convex oval
$\Gamma_0$ with positive curvature off zero.

Fix any interior point $o$ of this reference oval before choosing
connector centres or phases. Equal total masses in every block give
the uniform position estimate
\begin{equation}\label{Bneg:eq:C-graph-close}
 \sup_{[-t_*,t_*]}|\gamma-\gamma_0|
 \le C\sum_{n\ge n_0}b_nD_n.
\end{equation}
For a fully included block, subtract its zero total mass in the
double integration. Its contribution is bounded by $C b_nD_n$.
A partially included block has the same bound, and the series
converges. The graph endpoint slopes and all curvature germs are
unchanged. Only the right endpoint height may differ.

Fix two smooth bumps on the complementary angle interval whose
tangent-vector integrals are linearly independent. Adding signed
multiples of these two bumps to the positive reference radius
corrects this displacement exactly. The coefficients have size
$O(\sum b_nD_n)$ and the supports stay away from endpoints.
For large fixed $n_0$ the corrected radius stays positive and
matches all endpoint germs. The completed curve is uniformly close
to $\Gamma_0$ in position, and its supporting-line distances from
$o$ stay positive: choose the error below half the minimum such
distance for $\Gamma_0$. Thus the same $o$ is interior for all
allowed connector choices and their limits. Translate once by $-o$.
The complement is
uniformly positive in curvature and introduces no additional zero.

\subsubsection{Uniform cap integrals with the direction supremum outside}
We verify the cap condition uniformly over the remaining connector
choices. The supporting direction stays fixed throughout each depth
integral.
\begin{lemma}\label{Bneg:lem:C-cap}
Every completed oval above has $\mathcal C_{p_*}(\Omega)<\infty$,
uniformly over the allowed connector centres.
\end{lemma}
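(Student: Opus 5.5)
We follow the cap calculation of \Secref{Bneg:subsec:cap-calculation}, with $p$ replaced by $p_*$, the floor exponent $p$ replaced by $s_0$ in $\varepsilon_n=e^{-m_n^{-s_0}}$, and $a=1/2$. The essential difference is that the exceptional hole now contributes $m_n^{-s_0}$ rather than $m_n^{-p}$ to the logarithm of the inverse depth. Since $s_0<p_*$, this contribution still integrates against $L^{p_*-1}\dd L$ over $L\le A_*m_n$, giving $A_*^{p_*}m_n^{p_*-s_0}\le A_*^{p_*}$. Every estimate must be uniform in the connector centres $c_{n,i}$. This holds because the argument uses only three facts, none of which depends on the centres: $K\ge\varepsilon_n$ on expanded holes, $K\ge c_0K_0$ off holes, and nonnegativity of the connectors.

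The key steps, in order, are as follows.

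\textbf{Step 1: packing.} We choose $A_*$ large and then $\beta<1/[100(A_*+1)]$, so the expanded holes $H_{n,i}^*$ are pairwise disjoint. The packing estimate \Eqref{Bneg:eq:hole-packing} together with the tail bound \Eqref{Bneg:eq:C-radius-tail} shows that, for any parameter interval $J$ of length $L$ meeting only holes of radius below $L/A_*$, the holes occupy length less than $L/8$.

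\textbf{Step 2: depth lower bounds.} Let $D_{x,e}(L)=\int_0^L(L-v)K(x+ev)\dd v$. If the segment meets a hole with $m_n\ge L/A_*$, it lies in that expanded hole and $D_{x,e}(L)\ge\tfrac12\varepsilon_nL^2$; disjointness makes this exceptional hole unique for fixed $x$. Otherwise we discard the holes and $|x+ev|<L/8$, which gives $D_{x,e}(L)\gtrsim L^2e^{-8^{1/2}L^{-1/2}}$. Hence
\[
 \log^+\frac{C}{D_{x,e}(L)}\le C_1+C_2\log^+(1/L)+C_3L^{-1/2}
 +\mathbf1_{\{L\le A_*m_n\}}m_n^{-s_0}.
\]

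\textbf{Step 3: the depth integral.} We apply the Tonelli identity for $\int_0^{\delta_0}L_{x,e}(\delta)^{p_*}\dd\delta/\delta$. The regular terms are integrable because $1/2<p_*$, and the exceptional term is bounded as in the first paragraph. We then sum both sides at the contact. Bounded slope compares arclength with parameter length and normal depth with vertical depth; the vertical-versus-normal comparison is uniform because $\gamma'$ stays below $1/50$ for all choices, by \Eqref{Bneg:eq:C-exact-tangents}.

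\textbf{Step 4: completion.} We handle the completion from \Secref{Bneg:subsec:C-completion}. Its complement has uniformly positive curvature, the reference $\Gamma_0$ is fixed, and the correcting coefficients are $O(\sum b_nD_n)$ uniformly. So small caps centred off $[-t_*/2,t_*/2]$ avoid all holes and have uniformly bounded depth integrals, and intermediate depths contribute a bounded logarithmic amount. Translation by $-o$ does not change cap masses.

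The main obstacle is the uniformity of the completion step over connector centres, specifically that small caps centred in the central interval stay within the graph chart with a uniform threshold. We would obtain this from the position estimate \Eqref{Bneg:eq:C-graph-close} and uniform strict convexity outside $[-t_*/4,t_*/4]$. Uniform strict convexity follows from $K\ge c_0K_0$ there up to holes of negligible length, combined with the Step 2 depth bound, which is independent of the centres.
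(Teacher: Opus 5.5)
Your proposal is correct and follows essentially the same route as the paper's proof. You rerun the cap calculation of the below-two construction with exponent $p_*$, $a=1/2$ and floor $\varepsilon_n=e^{-m_n^{-s_0}}$, relying only on centre-independent inputs: disjoint expanded holes, $K\ge\varepsilon_n$ on them, $K\ge c_0K_0$ off the holes, and nonnegative connectors. The single exceptional hole then contributes $A_*^{p_*}m_n^{p_*-s_0}\lesssim1$ because $s_0<p_*$, and the completed part has uniformly positive curvature and caps of mass $O(\sqrt\delta)$.

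Your extra check, that the chart threshold for central caps is uniform in the centres via the centre-independent depth bound at a fixed parameter distance, is a detail the paper absorbs into its remark that the remaining depths cost only a fixed perimeter bound. One minor slip: no holes lie outside $[-t_*/4,t_*/4]$, since all blocks sit in $(0,t_*/4)$, so the phrase ``up to holes of negligible length'' in your last step is unnecessary.
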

\begin{proof}[Proof of \Lemref{Bneg:lem:C-cap}]
Use the cap calculation in \Secref{Bneg:subsec:cap-calculation}, now
with $q=p_*$, $a=1/2$ and $\varepsilon_n=e^{-m_n^{-s_0}}$.
Its hypotheses concern only the holes and lower bounds for $K$.
Here the holes have the same equal-spacing estimate
\Eqref{Bneg:eq:hole-packing}, and \Eqref{Bneg:eq:C-radius-tail}
pays its boundary errors. For fixed large $A_*$ and sufficiently
small $\beta$, their $(A_*+1)$-expansions are disjoint,
$K\ge\varepsilon_n$ on each expansion, and $K\ge c_0K_0$
outside the holes. Thus each fixed supporting contact has at most
one exceptional hole throughout the depth integral. The identical
defect and layer-cake calculation gives
\begin{equation}\label{Bneg:eq:C-cap-layercake}
 \begin{split}
 \int_0^{\delta_0}L_{x,e}(\delta)^q\frac{\dd\delta}{\delta}
 &\lesssim q\int_0^{L_{\max}}L^{q-1}
                 [1+\log^+(1/L)+L^{-a}]\,\dd L\\
 &\quad+q m_n^{-s_0}\int_0^{A_*m_n}L^{q-1}\,\dd L\\
 &\lesssim 1+m_n^{q-s_0}\lesssim1,
 \end{split}
\end{equation}
where the last term occurs for only that one hole. Here
$q>s_0>a$. Bounded graph slope compares normal depth and arclength
with the graph coordinates, so the estimate is uniform over the
original fixed normals. Shallow caps outside the central chart
lie in a compact positive-curvature part of the completed boundary
and have mass $O(\sqrt\delta)$. The remaining depths cost only a
fixed perimeter bound. This proves the assertion for the complete
original arclength.
\end{proof}

\subsection{Phase programming and a common test function}
\label{Bneg:sec:Ctest}

We use the fixed exponents
\begin{equation}\label{Bneg:Ctest:fixed-exponents}
 p_*>3,\qquad s_0=\frac{p_*+3}{2},\qquad a=\frac{1}{2}.
\end{equation}
Use the blocks and nodes of \Secref{Bneg:sec:geometry}, with
\Eqref{Bneg:eq:C-slopes}--\Eqref{Bneg:eq:C-holes}.
Write $v_{n,i}=(u_{n,i},v_{n,i}^{(2)})$.
The parameters satisfy
\begin{equation}\label{Bneg:Ctest:parameter-growth}
 \begin{gathered}
 m_n\eqsim b_n/N_n,\quad
 \log N_n\eqsim 2^{an}/n^2,\quad |\log b_n|=O(n),\\
 \log(1/m_n)=\log N_n+O(n)\quad\text{and}\quad
 \log\log(1/m_n)=O(n).
 \end{gathered}
\end{equation}
All geometric data are fixed independently of the later indicator tests.
In particular, \Eqref{Bneg:eq:C-matrix} gives
\begin{equation}\label{Bneg:Ctest:matrix-growth}
 \log\|A_n\|=O(t_n^{-a}+n)\quad\text{and}\quad
 |A_n^{-1}v_{n,i}|\eqsim Q_n.
\end{equation}
The actual positive movable connectors are exactly
\Eqref{Bneg:eq:C-residual}--\Eqref{Bneg:eq:C-movable-bump}.
Their first moments are $w_{n,i}c_{n,i}$, while
\Eqref{Bneg:eq:C-masses} preserves each prescribed slope.
Their uniform smoothness is \Eqref{Bneg:eq:C-jets}.
The fixed origin and the completion for every allowed centre array
are supplied by \Secref{Bneg:subsec:C-completion}.

We use the movable connector centres to prescribe the initial phases
of the actual curve points while preserving the cap estimate.
\begin{lemma}[Programming the initial original phases]
\label{Bneg:Ctest:phase-programming}
There is a fixed $C_*>0$ such that, with
\[
 k_{0,n}=\max\left\{0,
       \left\lceil\log_2(C_*Q_n/m_n)\right\rceil\right\},
 \qquad \mathfrak n_{n,i}=(-v_{n,i}^{(2)},u_{n,i}),
\]
every prescribed array
$\alpha_{n,i}\in\mathbb R/\mathbb Z$ can be realized on one
admissible limiting graph by
\begin{equation}\label{Bneg:Ctest:initial-phase}
 2^{k_{0,n}}\mathfrak n_{n,i}\cdot A_nz_{n,i}
      =\alpha_{n,i}\pmod1,\qquad
 z_{n,i}=(x_{n,i},\gamma(x_{n,i}))-o.
\end{equation}
Its completion is one fixed smooth strictly convex domain with
the fixed interior origin just specified.
\end{lemma}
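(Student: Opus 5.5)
The plan is to realize the phases block by block, exploiting the fact that moving connector centres changes the heights $\gamma(x_{n,i})$ but not the slopes. From \Eqref{Bneg:eq:C-masses} and \Eqref{Bneg:eq:C-exact-tangents}, every admissible centre array gives the same $\gamma'$ at every node and outside all blocks, so $\gamma'(x_{n,i})=s_{n,i}$ and the tangents $\tau_{n,i}$ are untouched. What does change is the height. Since $\gamma(x)=\int_0^x(x-t)K(t)\dd t$, moving the connector on the node interval $(x_{n,j-1},x_{n,j})$ by $h$ lowers $\gamma$ by $w_{n,j}h$ at every point to its right in the block. Outside the block the connector's total mass is fixed, but its first moment now enters, so points to the right of the block shift by the same constant. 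The completion of \Secref{Bneg:subsec:C-completion} corrects that displacement on the far side of the curve. As Lemma~\ref{Bneg:cor:interior-origin-normal-phase} shows, the phase is
\[
\mathfrak n_{n,i}\cdot A_nz_{n,i}=u_{n,i}M_n\bigl(\gamma(x_{n,i})-o_2-s_{n,i}(x_{n,i}-o_1)\bigr),
\]
so the phase of node $i$ is an affine function of the heights, with coefficient $2^{k_{0,n}}u_{n,i}M_n$ in front of $\gamma(x_{n,i})$.

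Within one block I process the nodes from left to right. Once the earlier centres are fixed, the connector on $(x_{n,i-1},x_{n,i})$ controls $\gamma(x_{n,i})$ affinely. Its admissible range has length $c_3d_n$, so it moves the height through an interval of length $c_3d_nw_{n,i}\ge c_3c_2d_nD_n/N_n$ by \Eqref{Bneg:eq:C-residual}. Multiplying by the phase coefficient, the phase sweeps a range of length at least
\[
2^{k_{0,n}}u_{n,i}M_n\,c\,d_nD_n/N_n\gtrsim \frac{C_*Q_n}{m_n}\,Q_n\,\frac{d_n}{N_n}\gtrsim C_*,
\]
using $M_nD_n\eqsim1$, $u_{n,i}\ge Q_n$, $m_n=\beta d_n$ and $N_n\eqsim Q_n^2$. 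Taking $C_*$ large enough makes this sweep exceed one full period, so by continuity some centre gives exactly $\alpha_{n,i}$. The last connector, on $(x_{n,N_n},r_n)$, remains free. It affects no node phase in block $n$ and is simply left at its middle position.

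The main obstacle is the coupling between blocks and with the origin. Moving centres in block $n$ shifts every later block, to the right, by a constant height. It also changes the completion coefficients, and the completion is on the far side of the oval, so in principle it could move the points $z_{n',i}$ relative to $o$. The origin $o$ is fixed in advance, so it does not move. The completion acts only on the complementary angle interval, so it does not affect the graph chart. Two couplings therefore remain: the height constants coming from blocks to the left, and from blocks to the right in whatever direction is used. My plan is to program the blocks in order of increasing $n$, which runs from right to left because $t_n=2^{-n}$, while normalizing $\gamma(0)=\gamma'(0)=0$ at the curvature zero.

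With that normalization, a point $x_{n,i}>0$ depends only on the curvature on $(0,x_{n,i})$, that is, on block $n$ itself and on the blocks with indices larger than $n$. That is the wrong order, so instead I would use a nested compactness argument. At stage $N$, solve for the phases of blocks $n\le N$, with blocks beyond $N$ at their default centres. This is a triangular system if it is treated from the block nearest zero outward, inside the finite problem. The admissible centre arrays form a compact product, and every finite problem has solutions. The constraints \Eqref{Bneg:Ctest:initial-phase} are closed conditions, because heights depend continuously on centres under the product topology, using the dominated tail $\sum b_nD_n$ from \Eqref{Bneg:eq:C-graph-close}. A diagonal limit therefore satisfies all of them at once. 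Finally, the uniform jet bound \Eqref{Bneg:eq:C-jets}, Lemma~\ref{Bneg:lem:C-cap} and \Secref{Bneg:subsec:C-completion} show that the limit is one fixed smooth strictly convex domain with the same interior origin $o$.
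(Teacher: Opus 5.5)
Your proposal is correct and follows essentially the paper's argument. Each node height is controlled affinely, left to right, by the connector just before it, giving a phase sweep $\gtrsim C_*$. The finite truncations are solved from the block nearest the curvature zero outward, which is the paper's order $L,L-1,\ldots,n_0$, and a compactness limit preserves the closed phase constraints. The only difference is that you freeze the inactive blocks at default connector centres rather than at the baseline $K_0$; this makes every finite stage, and hence the limit, automatically an admissible centre array.
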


\begin{proof}[Proof of \Lemref{Bneg:Ctest:phase-programming}]
For fixed incoming height and slope, double integration of the
connector gives
\[
 \gamma(x_{n,i})=\text{a fixed term}
                    +w_{n,i}(x_{n,i}-c_{n,i}).
\]
The fixed term includes the background. Hence the attainable height
interval has length at least
\[
 cD_nd_n/N_n\ge c b_nD_n/N_n^2.
\]
The outgoing slope is independent of $c_{n,i}$, and earlier node heights
are unchanged. Since
\begin{align*}
 \mathfrak n_{n,i}\cdot A_nz_{n,i}
 &=u_{n,i}M_n\gamma(x_{n,i})
  -(v_{n,i}^{(2)}+P_nu_{n,i})x_{n,i}
  -\mathfrak n_{n,i}\cdot A_no,
\end{align*}
the attainable interval on the left of \Eqref{Bneg:Ctest:initial-phase} has
length at least
\[
 c\,2^{k_{0,n}}Q_nD_n^{-1}\frac{b_nD_n}{N_n^2}
 \ge c C_*\frac{Q_n^2b_n}{m_nN_n^2}
 \ge c'C_*\ge2.
\]
Choose $C_*$ accordingly. The dependence on the current centre is affine
and continuous, so every residue is attained. Proceed from left to right
through the node intervals. The final connector restores the prescribed
outgoing slope and changes no earlier phase.

For an actual finite truncation ending at block $L$, retain the
baseline inside all blocks $n>L$, set
$\gamma(0)=\gamma'(0)=0$, and process the active blocks in the order
$L,L-1,\ldots,n_0$.  This is their spatial order from the origin outwards.
At each step the incoming height is known.  The incoming slope always
has its required baseline value, by preservation of every completed
block's total curvature mass.  Hence the finite construction satisfies
all its prescribed phases.

Each centre ranges over a compact interval. A diagonal subsequence
of the finite constructions converges at every fixed centre. On each
finite set of blocks this gives convergence of every curvature derivative,
while \Eqref{Bneg:eq:C-jets} makes the omitted tails uniformly small.
Thus the curvatures and their double integrals converge in $C^r$ for every
fixed $r$, including at zero. For each fixed $(n,i)$,
\[
 \exp\!\left(2\pi i\,2^{k_{0,n}}
             \mathfrak n_{n,i}\cdot A_nz_{n,i}\right)
       =e^{2\pi i\alpha_{n,i}}
\]
is a closed constraint and survives the limit for each fixed index.

The uniform completion in \Secref{Bneg:subsec:C-completion} applies
also to this limiting centre array. Its position bound
\Eqref{Bneg:eq:C-graph-close} keeps the same reference point $o$
strictly interior. The correction is confined to the complementary
arc and leaves every programmed graph point unchanged.
Thus the fixed translation by $-o$ in
\Eqref{Bneg:Ctest:initial-phase} remains valid, with no subsequent
change of origin or scale.
\end{proof}

The next elementary digit construction prescribes a finite sequence of
phases at separated dyadic times. It will select one common physical
scale for every contributing arc.
\begin{lemma}[Finite binary programming at separated times]
\label{Bneg:Ctest:binary-code}
For integers $h,T\ge1$ and arbitrary
$\vartheta_0,\ldots,\vartheta_{T-1}\in\mathbb R/\mathbb Z$, there is
$\alpha\in\mathbb R/\mathbb Z$ such that
\[
 \operatorname{dist}(2^{jh}\alpha-\vartheta_j,\mathbb Z)
       \le2^{-h}\qquad(0\le j<T).
\]
\end{lemma}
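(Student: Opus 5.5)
I will build $\alpha$ by specifying its binary expansion in disjoint blocks of $h$ digits, one block for each time $jh$. Write $\alpha=\sum_{r\ge1}\epsilon_r2^{-r}$ with digits $\epsilon_r\in\{0,1\}$, to be chosen. Multiplication by $2^{jh}$ shifts the expansion, so modulo one
\[
 2^{jh}\alpha\equiv\sum_{r\ge1}\epsilon_{jh+r}2^{-r}.
\]
The first $h$ digits of this shifted expansion are exactly the digits in positions $jh+1,\ldots,jh+h$, and these blocks are pairwise disjoint for $0\le j<T$. This disjointness is the whole point: the blocks can be assigned independently.

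For each $j$, pick a representative $\vartheta_j\in[0,1)$. Let $\epsilon_{jh+1},\ldots,\epsilon_{jh+h}$ be the first $h$ binary digits of $\vartheta_j$, that is, the digits of $\lfloor2^h\vartheta_j\rfloor/2^h$. Set all digits beyond position $Th$ equal to zero. Then for $0\le j<T$,
\[
 2^{jh}\alpha\equiv\frac{\lfloor2^h\vartheta_j\rfloor}{2^h}+\rho_j,
 \qquad
 0\le\rho_j=\sum_{r>h}\epsilon_{jh+r}2^{-r}\le2^{-h}.
\]
Both the truncation error $\vartheta_j-\lfloor2^h\vartheta_j\rfloor/2^h\in[0,2^{-h})$ and the tail $\rho_j$ lie in $[0,2^{-h}]$, and they enter with opposite signs. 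Their difference therefore lies in $(-2^{-h},2^{-h}]$, which gives $\operatorname{dist}(2^{jh}\alpha-\vartheta_j,\mathbb Z)\le2^{-h}$.

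There is no real obstacle here. The only points to check are:
\begin{itemize}
\item the two errors are handled by a signed bound rather than the triangle inequality, which would give the weaker constant $2\cdot2^{-h}$;
\item the tail $\rho_j$ contains the later blocks $j+1,\ldots,T-1$ but remains at most $\sum_{r>h}2^{-r}=2^{-h}$;
\item only finitely many digits are set, so $\alpha$ is a dyadic rational, and the choice of representative of $\vartheta_j$ is harmless because everything is read modulo one.
\end{itemize}
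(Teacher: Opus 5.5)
Your proof is correct and is essentially the paper's argument. The paper takes $\alpha=\sum_{j=0}^{T-1}a_j2^{-(j+1)h}$ with $a_j=\lfloor2^h\vartheta_j\rfloor$, which is exactly your blockwise digit assignment, and concludes by noting that modulo one $2^{jh}\alpha$ and $\vartheta_j$ lie in the same interval $[a_j2^{-h},(a_j+1)2^{-h}]$; this is the same as your signed comparison of the truncation error with the tail.
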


\begin{proof}[Proof of \Lemref{Bneg:Ctest:binary-code}]
Choose representatives in $[0,1)$, set
$a_j=\lfloor2^h\vartheta_j\rfloor$, and take
$\alpha=\sum_{j=0}^{T-1}a_j2^{-(j+1)h}$.  After multiplication by
$2^{jh}$ the preceding digit blocks are integers.  The remaining
fractional part belongs to the same interval
$[a_j2^{-h},(a_j+1)2^{-h}]$ as $\vartheta_j$, because the subsequent
finite tail is smaller than $2^{-h}$.  This proves the estimate,
including dyadic endpoint cases.  The iterates are one doubling
orbit, not independently chosen later phases.
\end{proof}

We now choose the phase array once and use
\Lemref{Bneg:Ctest:phase-programming} to fix the curve.  In one block
temporarily suppress $n$ and write $F=m^{-s_0}$.  Define
\begin{equation}\label{Bneg:Ctest:time-budget}
 \begin{aligned}
 K&=\left\lfloor\frac{F}{2\log2}\right\rfloor,
 &B&=K-k_0,\\
 h&=\lceil\log_2(B+2)\rceil,
 &g&=\left\lfloor\sqrt{B/(2h)}\right\rfloor,
 &T&=g^2.
 \end{aligned}
\end{equation}
For all sufficiently large blocks these are positive and
\begin{equation}\label{Bneg:Ctest:budget-comparisons}
 B\eqsim m^{-s_0},\quad h\eqsim\log(1/m),\quad
 g\eqsim\sqrt{B/h},\quad Th\le B/2.
\end{equation}
Indeed $k_0=O(\log Q+\log(1/m))=o(m^{-s_0})$, and the remaining
comparisons follow directly from the definitions.  Discard any finite
number of initial blocks needed for these estimates by increasing
the single starting generation before choosing the curve.

Let $u_0,\ldots,u_{T-1}$ be the regular $g$ by $g$ grid on
$\mathbb T^2=\mathbb R^2/\mathbb Z^2$.  It is a net of radius
$\sqrt2/(2g)$.  For each integer normal $\mathfrak n_i$ apply
\Lemref{Bneg:Ctest:binary-code} to
$\vartheta_{i,j}=\mathfrak n_i\cdot u_j\pmod1$, obtaining
$\alpha_i$.  Prescribe these arrays in
\Lemref{Bneg:Ctest:phase-programming} for all blocks.  On the resulting
one fixed curve set $\beta_i=\mathfrak n_i\cdot A z_i$.  At the genuine
original physical indices
\begin{equation}\label{Bneg:Ctest:physical-indices}
 k_j=k_0+jh,\qquad0\le j<T,
\end{equation}
we have
\begin{equation}\label{Bneg:Ctest:phase-following}
 \operatorname{dist}(2^{k_j}\beta_i-\mathfrak n_i\cdot u_j,
                         \mathbb Z)\le2^{-h}.
\end{equation}
All $k_j$ lie in $[k_0,K]$.  Their later phases are exactly the
doubling iterates of the programmed initial phases.  These prescribed
phases are part of the curve and will not be reset for any test function.

\smallskip\noindent\textbf{The actual central arcs and one common test function.}\space
The actual arcs are those of \Eqref{Bneg:eq:C-actual-arcs}, now with
one block index suppressed. Their parameter lengths are at least $cm$,
their masses are at least $cm$, and their errors satisfy
$|e_i(t)|\le C\varepsilon m^2$. They are disjoint true-arclength
restrictions, so $\sigma\ge\sum_{i=1}^N\nu_i$.

Choose one sufficiently large constant $C_0$ and set
\begin{equation}\label{Bneg:Ctest:stripe-width}
 \rho=\frac{C_0Q}{g}\quad\text{and}\quad
 \mathcal H=\bigcup_{i=1}^N
    \{u\in\mathbb T^2:
          \operatorname{dist}(\mathfrak n_i\cdot u,\mathbb Z)<\rho\}.
\end{equation}
The comparisons above and $s_0>3$ imply $N\rho\to0$, so in particular
$\rho<1/4$ for all sufficiently large blocks.  Each nonzero integer
character $u\mapsto\mathfrak n_i\cdot u\pmod1$ sends normalized Haar
measure on $\mathbb T^2$ to that on $\mathbb T$.  Its stripe has measure
$2\rho$, whence
\begin{equation}\label{Bneg:Ctest:source-density}
 |\mathcal H|\le2N\rho.
\end{equation}
The union bound applies regardless of dependence among directions.  Set the single periodic indicator $f(x)=\mathbf1_{\mathcal H}(Ax)$.

For each point $x$ choose a closest grid point $u_j$ to
$Ax\pmod{\mathbb Z^2}$.  The same $j$ then works for every direction:
by \Eqref{Bneg:Ctest:phase-following} and $|\mathfrak n_i|\le C_vQ$,
\[
 \operatorname{dist}(\mathfrak n_i\cdot Ax-2^{k_j}\beta_i,
                            \mathbb Z)
 \le C_vQ\frac{\sqrt2}{2g}+2^{-h}\le\rho/2.
\]
Here $C_0$ is fixed sufficiently large, and
$2^{-h}=o(Q/g)$ by \Eqref{Bneg:Ctest:budget-comparisons}.
Along the entire actual subarc in \Eqref{Bneg:eq:C-actual-arcs}, exact
tangency gives $\mathfrak n_i\cdot A\tau_i=0$, so
\[
 \mathfrak n_i\cdot A(x-2^{k_j}y_i(t))
 =\mathfrak n_i\cdot Ax-2^{k_j}\beta_i
                     -2^{k_j}\mathfrak n_i\cdot Ae_i(t).
\]
The final term is bounded uniformly in $i,j,t$ by
\begin{equation}\label{Bneg:Ctest:curved-error}
 C Q\|A\|2^K e^{-F}m^2
 \le C Q\|A\|m^2e^{-F/2}=o(\rho).
\end{equation}
Indeed,
\[
 \begin{gathered}
 \log\|A\|+\log Q+\log(1/m)+|\log\rho|=o(F),\\
 \log\bigl(CQ\|A\|m^2e^{-F/2}\bigr)=-F/2+o(F).
 \end{gathered}
\]
Thus the error in \Eqref{Bneg:Ctest:curved-error} is less than $\rho/4$ after
increasing the one initial generation. At the same index $k_j$ for every
$i$,
\[
 \operatorname{dist}\!\left(
   \mathfrak n_i\cdot A(x-2^{k_j}y_i(t)),\mathbb Z\right)
 <\rho,
 \qquad
 f(x-2^{k_j}y_i(t))=1.
\]
Consequently
\begin{equation}\label{Bneg:Ctest:common-value}
 (\sigma_{2^{k_j}}*f)(x)
 \ge\sum_{i=1}^N\nu_i(\mathbb R^2)
 \ge cNm\ge c'b.
\end{equation}
The entire curved arcs contribute at one common physical index. Choosing
the least nearest-grid index in a tie gives a measurable selector.

\smallskip\noindent\textbf{Bounded planar indicator tests.}\space
For fixed block data define the bounded planar indicators
\[
 E_R=\{x\in[-R,R]^2:Ax\pmod{\mathbb Z^2}\in\mathcal H\},
 \qquad\text{and}\qquad f_R=\one_{E_R},
\]
and take
\[
 D_{\rm loc}=1+2^K
       \sup_{y\in\operatorname{supp}\sigma}\|y\|_\infty.
\]
The measure is compactly supported and the selected family is finite.
Thus \Eqref{Bneg:Ctest:common-value} holds for $f_R$ on
$[-R+D_{\rm loc},R-D_{\rm loc}]^2$. Tiling by the fixed lattice
$A^{-1}\mathbb Z^2$ gives
\[
 \frac{|E_R|}{(2R)^2}\longrightarrow|\mathcal H|,
 \qquad\text{and}\qquad
 \frac{(2R-2D_{\rm loc})^2}{(2R)^2}\longrightarrow1.
\]
Only $O(R)$ fundamental cells meet the boundary, so the common cell
Jacobian cancels. Testing at level $c'b/2$ and letting $R\to\infty$ yields
\begin{equation}\label{Bneg:Ctest:RWT-lower}
 \begin{split}
 \|M_\sigma\|_{L^p\to L^{p,\infty}}^{\mathrm{restricted}}
 &\ge c_pb|\mathcal H|^{-1/p}\\
 &\ge c_p'b(N\rho)^{-1/p}.
 \end{split}
\end{equation}
For each block, a finite $R$ realizes any smaller ratio. All averages act
on the same $f_R$, and their finite physical family is contained in the
unchanged bilateral supremum.

\smallskip\noindent\textbf{Every fixed exponent $p_*>3$.}\space
The estimate for the periodic density retains its necessary $Q$ factor:
\begin{equation}\label{Bneg:Ctest:density-powers}
 \begin{split}
 N\rho&\le C NQ\sqrt{h/B}\\
 &\le C b^{s_0/2}N^{(3-s_0)/2}[\log(1/m)]^{1/2}.
 \end{split}
\end{equation}
Consequently \Eqref{Bneg:Ctest:RWT-lower}, applied with $p=p_*$, gives
\begin{equation}\label{Bneg:Ctest:ten-thirds-divergence}
 \begin{split}
 \|M_\sigma\|_{L^{p_*}\to L^{p_*,\infty}}^{\mathrm{restricted}}
 &\ge c_{p_*} b_n^{1-s_0/(2p_*)}N_n^{(s_0-3)/(2p_*)}
                    [\log(1/m_n)]^{-1/(2p_*)}\\
 &=c_{p_*} b_n^{3(p_*-1)/(4p_*)}N_n^{(p_*-3)/(4p_*)}
                    [\log(1/m_n)]^{-1/(2p_*)}
 \longrightarrow\infty.
 \end{split}
\end{equation}
Indeed its logarithm is
$[(p_*-3)/(4p_*)]\log N_n+O_{p_*}(n)$ by
\Eqref{Bneg:Ctest:parameter-growth}. Here
$\log N_n\eqsim2^{an}/n^2\gg n$ and $p_*>3$.
This proves restricted weak, ordinary weak and strong failure at each
fixed $p_*>3$. The curve may depend on $p_*$, but is independent of
the block index and all later indicator tests.

Together with \Lemref{Bneg:lem:C-cap}, this proves the second
case of \Thmref{Bneg:thm:general-negative}.

\medskip
\noindent\textbf{Conflicts of Interest}\space
The authors declare that there are no conflicts of interest.
\par\medskip
\noindent{\bf AI disclosure.}
ChatGPT and Codex assisted with drafting, mathematical checks and
references. This is a research draft. The named authors must verify
the mathematics, authorship information and disclosures before
submission.

\end{document}